\newtheoremstyle{slplain}
  {.8\baselineskip\@plus.2\baselineskip\@minus.2\baselineskip}
  {.4\baselineskip\@plus.2\baselineskip\@minus.2\baselineskip}
  {\slshape}
  {}
  {\bfseries}
  {.}
  { }
  {}
\theoremstyle{slplain}
\newtheorem{theorem}{Theorem}
\newtheorem{lemma}{Lemma}
\newtheorem{corollary}{Corollary}
\newtheorem*{theorem*}{Theorem}
\newtheorem*{corollary*}{Corollary}
\newtheorem*{remark}{Remark}
\newcommand\numberthis{\addtocounter{equation}{1}\tag{\theequation}}
\begin{document}

\title[Slice Implies Mutant-Ribbon]{Slice Implies Mutant-Ribbon \protect\\ for Odd, 5-Stranded Pretzel Knots}
\author{Kathryn A. Bryant}

\begin{abstract}
A pretzel knot $K$ is called {\it odd} if all its twist parameters are odd, and {\it mutant ribbon} if it is mutant to a simple ribbon knot. We prove that the family of odd, 5-stranded pretzel knots satisfies a weaker version of the Slice-Ribbon Conjecture: All slice, odd, 5-stranded pretzel knots are {\it mutant ribbon}. We do this in stages by first showing that 5-stranded pretzel knots having twist parameters with all the same sign or with exactly one parameter of a different sign have infinite order in the topological knot concordance group, and thus in the smooth knot concordance group as well. Next, we show that any odd, 5-stranded pretzel knot with zero pairs or with exactly one pair of canceling twist parameters is not slice. 
\end{abstract}

\maketitle

\section{Introduction} \label{intro}
A knot $K \subset S^3$ is {\it smoothly slice} if it bounds a smoothly embedded disk in the $4$-ball. The notion of smoothly slice knots can be used to define the smooth knot concordance group $\mathcal{C}$ under the operation of connected sum. It is a widely-studied group for which a slice knot represents the identity element. For explicit information about the concordance relation, see \cite{livknots}. Fine details of the group structure of $\mathcal{C}$ continue to elude mathematicians, but concordance order is one small way of gaining insights into $\mathcal{C}$. The topic of determining slice knots and concordance order for knots within families of pretzel knots has also been studied with increasing frequency over the past 30 years and various results can be found in \cite{greene-jabuka}, \cite{lecuona}, \cite{miller}, \cite{HKL}, and \cite{long}. 

The Slice-Ribbon Conjecture hypothesizes that a if a knot is slice, then it is also ribbon. Given that ribbon knots are easily seen to be slice, this is ultimately a conjecture about the equivalence of the notions `slice' and `ribbon'. Previous work by Joshua Greene and Stanislav Jabuka in \cite{greene-jabuka} on the Slice-Ribbon Conjecture for odd, $3$-stranded pretzel knots and work by Ana Lecuona in \cite{lecuona} on even pretzel knots inspired this project. This paper studies sliceness and concordance order for odd, 5-stranded pretzel knots. 

A {\it $k$-stranded pretzel link}, denoted $P(p_1, p_2, .\ .\ .\ ,p_k)$ where the $p_i \in \mathbb{Z} - \{0\}$ are called the {\it twist parameters}, is a knot in two cases: when exactly one of the twist parameters is even, or when $k$ is odd and all the twist parameters are odd. A pretzel knot is called {\it even} in the former case and {\it odd} in the latter. A {\it 0-pair pretzel knot} is a pretzel knot for which there are no canceling pairs of twist parameters satisfying $p_i = -p_j$. A {\it 1-pair pretzel knot} is a pretzel knot for which there exists a canceling pair of twist parameters, but when the pair is removed from the $k$-tuple defining the knot, the resulting $(k-2)$--stranded knot is 0-pair. Generally, a {\it t-pair pretzel knot} is one for which removing a single canceling pair of twist parameters results in a $(t-1)$-pair pretzel knot with two fewer strands. With this defintion, 5-stranded pretzel knots $P(a,b,c,d,e)$ can be $0$-pair, 1-pair, or 2-pair. See Figure \ref{pknot1}.

When proving statements about pretzel knots, it is often necessary to differentiate between the knots that contain twist parameters equal to $\pm 1$ and those that do not. If for $K = P(p_1, ..., p_k)$ there exists $i \in \{1, ..., k\}$ such that $p_i = \pm 1$, then we say $K$ is a {\it pretzel knot with single-twists}; otherwise, we say $K$ is a {\it pretzel knot without single-twists}. 
 
\begin{figure}
\includegraphics[height=175pt]{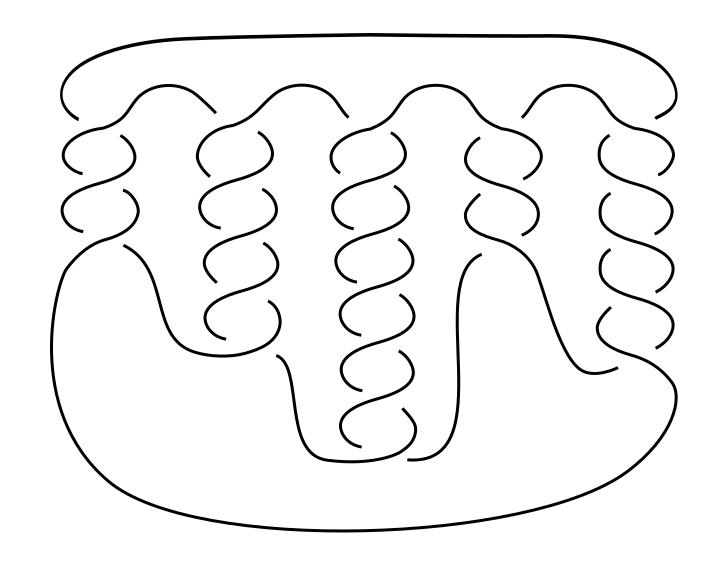}
\caption{Pretzel Knot $P(3,5,7,-3,-5)$}
\label{pknot1}
\end{figure}

The classification of pretzel knots appears in \cite{zieschang}, which classifies the much larger class of Montesinos knots of which pretzel knots are a special case. The classification gives that two pretzel knots without single-twists are smoothly isotopic if their twist parameters differ by cyclic permutations, reflections, or compositions thereof. Two pretzel knots {\it with} single-twists are smoothly isotopic if their twist parameters differ by cyclic permutations, reflections, and/or transpositions involving $\pm 1$-twisted strands. Two $k$-stranded pretzel knots whose twist parameters are equal as {\it unordered} $k$-tuples but not equal as {\it ordered} $k$-tuples are called {\it pretzel knot mutants}. This specific kind of mutation is the only type considered here, so ``mutation" from this point on will always mean ``pretzel knot mutation." 

Mutation is a crucial topic for the problem of determining sliceness for $k$-stranded pretzel knots when $k \geq 4$ because many knot invariants used to obstruct sliceness are unable to detect pretzel knot mutants. In fact, any knot invariant based on the double branched cover of $S^3$ along the knot will fail to detect pretzel knot mutants; Bedient shows in \cite{bedient} that any two pretzel knots defined by the same unordered $k$-tuple of twist parameters have the same double branched cover. Given a $k$-tuple  $(p_1, ... ,p_k)$ of twist parameters, $P\{p_1, ..., p_k\}$ will denote the set of pretzel knots having $\{p_1, ... , p_k\}$ as twist parameters, as well as all mirrors of such knots.  

Among pretzel knots is a subset of knots we will call {\it simple ribbon}. A {\it simple ribbon move} on a pretzel knot is the ribbon move shown in Figure \ref{ribbonmove}, performed always on the top-most twist of two adjacent strands of $K$ having canceling numbers of twists. We say a pretzel knot $K$ is {\it simple ribbon} if there exists a sequence of simple ribbon moves that reduces $K$ to a 1-stranded pretzel knot (if $K$ is odd) or to a 2-stranded pretzel knot $P(a,b)$ where $a = -b-1$ (if $K$ is even). A prerequisite for a pretzel knot to be simple ribbon is that if $K$ is $k$-stranded, then $K$ must be $(\frac{k-1}{2})$-pair. But, while all 1-pair, 3-stranded pretzel knots are simple ribbon, not all 2-pair, 5-stranded pretzel knots are simple ribbon. For example, the 2-pair knot $P(3, 5, -3, -5, 7)$ is not simple ribbon because no two adjacent strands have canceling numbers of twists. This phenomenon extends for all $k \geq 4$.

\begin{figure}
\includegraphics[height=400pt]{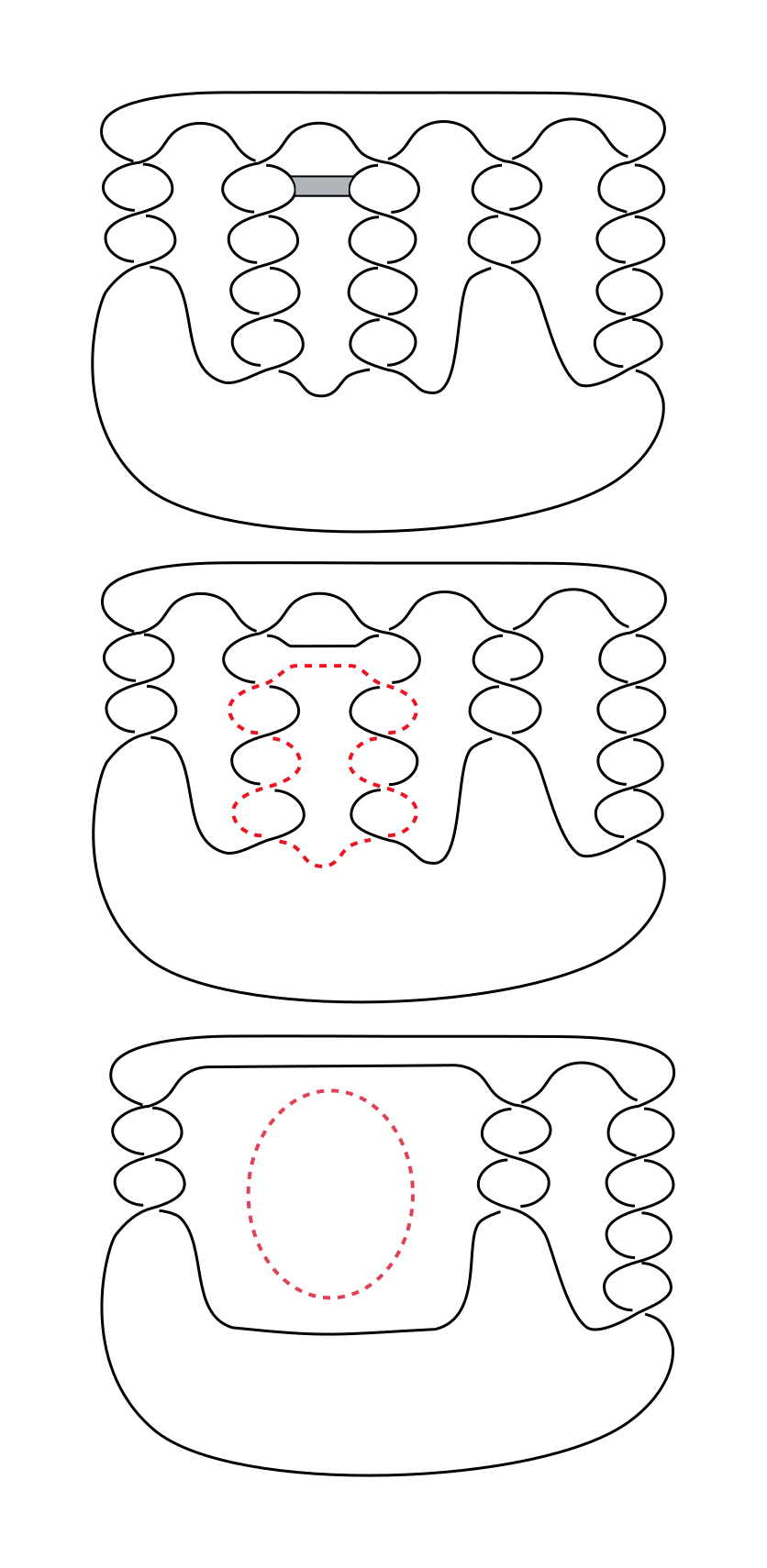}
\caption{Simple ribbon move on Pretzel Knot $P(-3,-5,5,3,-5)$}
\label{ribbonmove}
\end{figure}

\vspace{.05in}

\section{Results} \label{results}

As previously mentioned, this project was motivated by work of Joshua Greene and Stanislav Jabuka in \cite{greene-jabuka} on the Slice-Ribbon Conjecture for odd, $3$-stranded pretzel knots and by work of Ana Lecuona in \cite{lecuona} on even pretzel knots. In \cite{lecuona}, Lecuona writes down the following conjecture:
 
\noindent {\bf Pretzel Ribbon Conjecture.} (Lecuona) Let $K$ be a pretzel knot whose twist parameters are all greater than 1 in absolute value. If $K$ is ribbon, then $K$ is simple ribbon. \

For odd, 3-stranded pretzel knots, the Pretzel Ribbon Conjecture posits that the only ribbon knots are the simple ribbon knots, i.e. the 1-pairs. Similarly for odd, 5-stranded pretzel knots, it says that the only ribbon knots are the simple ribbon knots, which are 2-pairs {\it for which at least one of the canceling pairs is adjacent}. Greene and Jabuka show in \cite{greene-jabuka} that odd, 3-stranded pretzel knots satisfy both the Pretzel Ribbon Conjecture and the Slice-Ribbon Conjecture by proving that a knot of this type is slice if and only if it is 1-pair. This result, which proves the two aforementioned conjectures in a particular case, hints to the following possible strengthening of the Slice-Ribbon Conjecture in the specific case of pretzel knots:

\noindent {\bf Pretzel Slice-Ribbon Conjecture.} If $K$ is a slice pretzel knot, then $K$ is simple ribbon.

Of course, if the Pretzel Ribbon Conjecture is true then the above is equivalent to the original version of the Slice-Ribbon Conjecture. There is evidence that supports the Pretzel Slice-Ribbon Conjecture in the odd, 5-stranded case. In \cite{HKL}, the authors prove that $P(3,5,-3,-5,7)$ is {\it not} slice despite being mutant to the two simple ribbon knots $P(3,-3, 5, -5, 7)$ and $P(3, 5, -5, -3, 7)$. See Figure \ref{ribbonmove} for an illustration of the ribbon move on two adjacent strands in a 5-stranded pretzel knot whose twist numbers cancel.

This present work applies the techniques used by Jabuka and Greene on odd, 3-stranded pretzel knots to odd, 5-stranded pretzel knots, in the hope of showing that this new class of knots also satisfies the Pretzel Slice-Ribbon Conjecture as well. It should be noted that Greene and Jabuka went a step farther and proved that all non-slice, odd, 3-stranded pretzel knots have infinite order in $\mathcal{C}$. To obtain these results, they used three tools: the knot signature from classical knot theory, Donaldson's Diagonalization Theorem from gauge theory, and the $d$-invariant from Heegaard-Floer Theory. 

The main results of this project are given below, accompanied by brief explanations as to where each of the above three tools comes into play. In Theorem 1 and its corollary, $\sigma(K)$ denotes the signature of $K$; $s$ is the difference between the number of positive twist parameters and the number of negative twist parameters of $K$; $\hat{e}$ is the orbifold Euler characteristic of $K$ given by the sum of the reciprocals of the twist parameters; and $\text{sgn}()$ is the function returning -1, 0, or +1 according to whether the input is negative, zero, or positive, respectively. The first result is about about the larger class of odd, $k$-stranded pretzel knots where $k \geq 3$:

\begin{theorem}
\label{thm:sthm}
If $K$ is an odd, $k$-stranded pretzel knot, then $\sigma(K) = s - \text{sgn}(\hat{e})$. In particular, $\sigma(K) = 0$ if and only if $s = \text{sgn}(\hat{e})$. 
\end{theorem}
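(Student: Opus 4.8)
The plan is to work entirely with the symmetrized Seifert form of the standard spanning surface of $K$ and reduce the whole computation to a single linear-algebra fact about restricting a diagonal form to a hyperplane. Write $K = P(p_1,\dots,p_k)$ with all $p_i$ odd and $k$ odd. First I would take the canonical genus-$\frac{k-1}{2}$ Seifert surface $F$ built from two disks joined by $k$ bands carrying $p_1,\dots,p_k$ half-twists; using the natural basis of $H_1(F)$ given by the loops threading consecutive pairs of bands, the symmetrized Seifert matrix $G = V + V^{T}$ is the $(k-1)\times(k-1)$ tridiagonal matrix with $G_{ii} = p_i + p_{i+1}$ and $G_{i,i+1} = G_{i+1,i} = p_{i+1}$. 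The sign of the off-diagonal entries is immaterial, since conjugating by $\mathrm{diag}(1,-1,1,\dots,1)$ flips it while preserving the diagonal and the signature. The only knot-theoretic input is thus recording this matrix, either by direct computation on $F$ or by adapting the $3$-stranded computation in \cite{greene-jabuka}; by definition $\sigma(K) = \sigma(G)$.

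The key observation is that $G$ factors as
\[
 G = A\,\Delta\,A^{T}, \qquad \Delta = \mathrm{diag}(p_1,\dots,p_k),
\]
where $A$ is the $(k-1)\times k$ matrix with $A_{ii}=A_{i,i+1}=1$ and all other entries $0$. Since $A$ has full row rank $k-1$, the map $A^{T}$ is injective and $x^{T}Gx = (A^{T}x)^{T}\Delta(A^{T}x)$ for all $x$; hence $\sigma(G)$ equals the signature of $\Delta$ restricted to the hyperplane $W = \mathrm{im}(A^{T}) = (\ker A)^{\perp}$. A one-line computation shows $\ker A$ is spanned by the alternating vector $w = (1,-1,1,\dots,1)$, with $w_k = +1$ because $k$ is odd, so $W = w^{\perp}$ and $\sigma(K) = \sigma(\Delta|_{w^{\perp}})$.

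Next I would invoke the hyperplane-restriction lemma: if $\Delta$ is a nondegenerate symmetric form and $w^{T}\Delta^{-1}w \neq 0$, then $\Delta|_{w^{\perp}}$ is nondegenerate and
\[
 \sigma\big(\Delta|_{w^{\perp}}\big) = \sigma(\Delta) - \mathrm{sgn}\!\left(w^{T}\Delta^{-1}w\right).
\]
This follows by setting $u = \Delta^{-1}w$, observing that $w^{\perp}$ is precisely the $\Delta$-orthogonal complement of $u$ with $\Delta(u,u) = w^{T}\Delta^{-1}w$, so that $\mathbb{R}^{k} = \mathrm{span}(u)\oplus w^{\perp}$ is a $\Delta$-orthogonal splitting and signatures add. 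Now $\sigma(\Delta) = s$ by definition of $s$, and since $w_i^{2}=1$ we get $w^{T}\Delta^{-1}w = \sum_i 1/p_i = \hat{e}$. This gives $\sigma(K) = s - \mathrm{sgn}(\hat{e})$, and because $\mathrm{sgn}(\hat e)=0$ only when $\hat e = 0$, the ``in particular'' clause follows at once.

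The one point requiring care is verifying the hypothesis $\hat{e}\neq 0$ of the lemma, equivalently that the restricted form is nondegenerate, and this is exactly where the all-odd hypothesis enters. I would settle it by a parity argument: $\det G = \prod_{j} p_j \cdot \hat{e} = \sum_{i=1}^{k}\prod_{j\neq i}p_j$, and since every $p_j$ is odd, each of the $k$ summands is a product of $k-1$ odd integers, hence odd, and a sum of $k$ (an odd number of) odd integers is odd. Therefore $\det G$ is a nonzero odd integer, so $\hat e \neq 0$ and $\mathrm{sgn}(\hat e)\in\{\pm 1\}$; the degenerate case of the lemma never arises. The main obstacle, then, is not the linear algebra but pinning down the Seifert form of $F$ with the correct entries: once the factorization $G = A\Delta A^{T}$ is in hand, the identification of $w^{T}\Delta^{-1}w$ with $\hat e$ makes the result immediate.
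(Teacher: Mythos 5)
Your proposal is correct, and it reaches the formula by a genuinely different route than the paper. The paper's proof is four-dimensional: it invokes the Kauffman--Taylor theorem $\sigma(K)=\sigma(Q_S)$, where $S$ is the double branched cover of $B^4$ along a Seifert surface, identifies $S$ with the star-shaped plumbing $P_0$ of Section \ref{graphbasics}, and diagonalizes the $(k+1)\times(k+1)$ incidence matrix of $\Gamma_0$, the correction term $-\hat{e}$ arising as the Schur complement of the weight-zero central vertex. (An earlier version of that argument instead starts from Jabuka's closed formula $\sigma(K)=\sum_{i}\mathrm{sgn}(\sigma_{i-1}\sigma_i)$ and runs a sign analysis on elementary symmetric polynomials of the twist parameters; your route differs from that one even more sharply.) You stay entirely with the classical Seifert form: the factorization $G=A\Delta A^{T}$, the identification $\mathrm{im}(A^{T})=w^{\perp}$ for the alternating kernel vector $w$, and the hyperplane-restriction lemma, with $-\mathrm{sgn}(\hat{e})$ appearing as the sign of $\Delta(u,u)$ for $u=\Delta^{-1}w$. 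The two computations are dual pieces of the same linear algebra --- in both, the correction is the quantity $w^{T}\Delta^{-1}w=\sum_i 1/p_i=\hat{e}$, extracted once by Schur complement and once by orthogonal splitting --- but yours buys elementarity and self-containedness (no branched-cover or plumbing input beyond the definition of $\sigma$), while the paper's buys economy in context, since $P_0$ and its Kirby-calculus companion $P_+$ are reused for the Donaldson and $d$-invariant obstructions in later sections. Two further remarks: your parity argument that $\hat{e}\neq 0$ is a genuine plus, since your lemma honestly requires nondegeneracy while the paper leaves this point silent (its diagonalization formally tolerates $\hat{e}=0$); and the one assertion you should justify is $\det G=\prod_j p_j\cdot\hat{e}$ --- given $G=A\Delta A^{T}$ this is one line via Cauchy--Binet, because every maximal minor of $A$ equals $1$, or it can be quoted as Jabuka's determinant formula, which the paper cites in Section \ref{sec:thm2}.
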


\vspace{.05in}

\begin{corollary}
\label{cor1}
All odd, $k$-stranded pretzel knots with $s \neq \text{sgn}(\hat{e})$ have infinite order in the topological knot concordance group $\mathcal{T}$.
\end{corollary}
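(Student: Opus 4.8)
The plan is to exploit the fact that the knot signature descends to a homomorphism from the topological concordance group $\mathcal{T}$ to $\mathbb{Z}$, so that a single nonzero value forces infinite order. Theorem \ref{thm:sthm} already supplies the crucial input: since $K$ is assumed to satisfy $s \neq \text{sgn}(\hat{e})$, the formula $\sigma(K) = s - \text{sgn}(\hat{e})$ immediately gives $\sigma(K) \neq 0$. The remaining work is to convert this one numerical invariant into a statement about concordance order.

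First I would recall the two standard properties of the Murasugi--Trotter signature. It is additive under connected sum, $\sigma(K_1 \# K_2) = \sigma(K_1) + \sigma(K_2)$, because Seifert forms add under connected sum and the signature of an orthogonal direct sum of symmetric forms is the sum of the signatures. Second, it vanishes on topologically slice knots: such a knot is algebraically slice, so its symmetrized Seifert form is metabolic, and a metabolic symmetric form has signature zero. Together these two facts say that $\sigma$ factors through $\mathcal{T}$ to give a well-defined group homomorphism $\sigma \colon \mathcal{T} \to \mathbb{Z}$.

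Then I would run the standard order argument. Suppose toward a contradiction that $[K]$ has finite order $n \geq 1$ in $\mathcal{T}$, so that the $n$-fold connected sum represents the identity, i.e. is topologically slice. Applying the homomorphism gives $0 = \sigma(\#^n K) = n\,\sigma(K)$, whence $\sigma(K) = 0$, contradicting $\sigma(K) \neq 0$. Therefore $[K]$ has infinite order in $\mathcal{T}$. Since every smooth concordance is in particular a topological locally-flat one, the forgetful map $\mathcal{C} \to \mathcal{T}$ is a homomorphism sending the smooth class of $K$ to its topological class; a homomorphism cannot send a finite-order element to an infinite-order one, so $K$ has infinite order in $\mathcal{C}$ as well.

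Finally, I expect the only genuine subtlety -- rather than an obstacle -- to be bookkeeping about the topological versus smooth categories. The point is that $\sigma$ is a concordance invariant in the topological locally-flat category, not merely the smooth one, and it is precisely this topological invariance that upgrades the conclusion from $\mathcal{C}$ to the a priori stronger statement about $\mathcal{T}$. I would therefore be careful to invoke the topological invariance of $\sigma$ (equivalently, that topologically slice implies algebraically slice) rather than a smooth slice-obstruction version, since the corollary is stated for $\mathcal{T}$.
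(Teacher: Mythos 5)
Your proposal is correct and matches the paper's argument exactly: the paper also derives the corollary by combining Theorem \ref{thm:sthm} (which gives $\sigma(K) = s - \text{sgn}(\hat{e}) \neq 0$) with the fact that $\sigma$ is a homomorphism $\mathcal{T} \to \mathbb{Z}$, so a nonzero signature forces infinite order, with the smooth statement following via the forgetful map $\mathcal{C} \to \mathcal{T}$. Your write-up simply fills in the standard details (additivity, vanishing on topologically slice knots, and the finite-order contradiction) that the paper leaves implicit.
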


The corollary follows from the fact that $\sigma$ is a homomorphism from $\mathcal{T} \rightarrow \mathbb{Z}$, and it implies infinite order in the smooth knot concordance group $\mathcal{C}$ as well. It is a well-known fact that we call on later that if a knot $K$ is slice, then $\sigma(K) = 0$. An implicit implication of Theorem 1 is that all odd pretzel knots for which $s \neq \pm 1$ are not slice, which is particularly easy to read off from the $k$-tuple defining the knot. For odd, 5-stranded pretzel knots this tells us that if all or all but one of the twist parameters have the same sign, then $K$ is not slice. 

Unfortunately, the signature alone is necessary but clearly insufficient for determining sliceness in odd pretzel knots for which $s = \pm 1$. For example, the pretzel knot $K = P(-3, -5, -7, 9, 27)$ has vanishing signature, but the Pretzel Slice-Ribbon Conjecture gives us reason to think that $K$ may not be slice. Such occurrences in the odd, 3-stranded case prompted Jabuka and Greene to turn to an obstruction based on Donaldson's Diagonalization Theorem, which is ultimately phrased as a lattice embedding condition necessary for sliceness. This same obstruction was originally used by Paolo Lisca in \cite{lisca} to classify slice knots within the family of 2-bridge knots.

The use of Donaldson's Diagonalization Theorem to define a `Lattice Embedding Condition' for sliceness is based on the construction of a (potentially hypothetical) closed, definite 4-manifold $X$, created as follows: Assume $K$ is a slice knot. Let $Y$ be the double branched cover of $S^3$ along $K$, $W$ be a rational homology 4-ball with $\partial W = Y$, and $P$ be a canonical\footnote{A canonical definite plumbing $P$ is one for which the weights of the vertices in the corresponding plumbing graph are either all $\geq 2$ or $\leq -2$} 4-dimensional plumbing with $\partial P = Y$. Define $X = P \cup_Y W$. The Lattice Embedding Condition arises by applying the Diagonalization Theorem to $X$, for which it is necessary to verify that the intersection form on $X$, $Q_X$, is definite. We do this in Section \ref{donald}.

The Lattice Embedding Condition for sliceness puts great restrictions on the possible $k$-tuples that can define a slice, odd pretzel knot, so it enables us to conclude that all but a very select subset of such knots are not slice.  Unfortunately, the knots that satisfy both the vanishing signature condition and the Lattice Embedding Condition are not easily differentiated from the knots satisfying the signature condition but {\it not} the Lattice Embedding Condition. For example, sliceness is obstructed for $P(-3, -17, 27)$ and $P(-3, -7, -19, 17, 55)$ by the Lattice Embedding Condition, but sliceness is {\it not} obstructed for $P(-3, -17, 29)$ and $P(-3, -7, -19, 19, 55)$. 

For this reason, Jabuka and Greene introduced a third slice obstruction, this time of their own creation, based on the $d$-invariant from Heegaard-Floer theory. It assumes the same construction used above involving $K$, $Y$, $W$, $P$, and $X$, but it boils down to a comparison of two different `counts' obtained by analysis on the homology long exact sequences of the pairs $(X,W)$ and $(P,Y)$. We refer to it here as `Coset Counting Condition I'. Combining the signature obstruction, the Lattice Embedding Condition, and Coset Counting Condition I, Jabuka and Greene were able to prove their full result. With these same tools, we obtain the following partial results for odd, 5-stranded pretzel knots with signature zero:

\begin{theorem}
\label{thm2}
If $K$ is a 0-pair, odd, 5-stranded pretzel knot, then $K$ is not slice.
\end{theorem}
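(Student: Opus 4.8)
The plan is to follow the three-tool template of Greene and Jabuka, specialized to the 0-pair case. First I would invoke Theorem \ref{thm:sthm} to cut down the candidate tuples. Since $K = P(a,b,c,d,e)$ has five odd twist parameters, the quantity $s$ is odd, so $s \in \{\pm 1, \pm 3, \pm 5\}$; a slice knot satisfies $\sigma(K)=0$, and Theorem \ref{thm:sthm} then forces $s = \text{sgn}(\hat{e}) \in \{-1,0,1\}$, which is only compatible with $s = \pm 1$. Replacing $K$ by its mirror if necessary (this preserves sliceness and negates every $p_i$), I may assume $s=1$; naming the parameters so that three are positive and two negative, I write $K=P(a,b,c,-d,-e)$ with $a,b,c,d,e$ positive odd integers, so that the 0-pair hypothesis becomes $\{a,b,c\}\cap\{d,e\}=\emptyset$.

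Next I would run the Donaldson obstruction. Assuming $K$ is slice, let $Y$ be the double branched cover of $S^3$ along $K$, let $W$ be the rational homology $4$-ball it bounds, and let $P$ be the canonical definite plumbing with $\partial P = Y$; form the closed $4$-manifold $X = P \cup_Y W$. The first task is to confirm that $Q_X$ is definite, which I would do by computing the Seifert data of $Y$ and checking the sign of $\hat{e}$ against the orientation of the plumbing (the verification promised for Section \ref{donald}). Donaldson's Diagonalization Theorem then forces $Q_X \cong \pm I$, and since $W$ is a rational homology ball the plumbing lattice $(H_2(P), Q_P)$ — a star-shaped lattice with one central vertex and five linear arms arising from the continued-fraction expansions of $a,b,c,-d,-e$ — must embed into the standard diagonal lattice $\mathbb{Z}^n$. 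I would then carry out the combinatorial analysis of which 0-pair tuples admit such an embedding, tracking how the arm vectors may share coordinate directions and thereby forcing the surviving tuples into a short, explicit list.

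The final step handles whatever tuples survive the lattice condition using Coset Counting Condition I. For each surviving $X = P \cup_Y W$ I would compute the two counts of coset representatives extracted from the long exact sequences of the pairs $(X,W)$ and $(P,Y)$, organized by the $d$-invariants of $Y$, and exhibit the numerical mismatch between them that obstructs the existence of the rational homology ball $W$. Since $W$ then cannot exist, $K$ cannot be slice, completing the argument for every 0-pair tuple.

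The hard part will be the lattice-embedding case analysis in the middle step. With five arms rather than the three of the Greene–Jabuka setting, the star-shaped plumbing produces many more embedding configurations, and the bookkeeping needed to show that no 0-pair tuple yields an admissible embedding — or to isolate the handful that do for the coset-counting endgame — is substantially heavier than in the three-stranded case. Controlling how the arm vectors overlap in $\mathbb{Z}^n$, and in particular ruling out the degenerate overlaps that would secretly correspond to a canceling pair (violating $\{a,b,c\}\cap\{d,e\}=\emptyset$), is where most of the work lies.
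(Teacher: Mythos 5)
Your three-tool template is the same one the paper uses, and your first step (signature reduction to $s=\pm1$ and a tuple $P(-a,-b,-c,d,e)$) is carried out correctly. The genuine gap is in your middle step and the endgame that depends on it: you assume the lattice-embedding analysis will force the surviving 0-pair tuples ``into a short, explicit list,'' which you then plan to eliminate one at a time by computing coset counts. That list is not finite. The Lattice Embedding Condition for $P(-a,-b,-c,d,e)$ reduces to the integer system $\alpha+\beta+\gamma=1$, $x+y+z=1$, $a\alpha x+b\beta y+c\gamma z=0$, $d=a\alpha^2+b\beta^2+c\gamma^2$, $e=ax^2+by^2+cz^2$ (the Updated Embedding Conditions of Section~\ref{donald}), and infinitely many 0-pair tuples admit solutions. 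For instance, take $(a,b,c)=(3,5,7)$, $(\alpha,\beta,\gamma)=(2,-1,0)$, and $(x,y,z)=(5k,6k,1-11k)$: every $k$ gives $d=17$ and $e=1102k^2-154k+7$, e.g.\ the 0-pair knot $P(-3,-5,-7,17,955)$ at $k=1$, all of which pass the embedding condition. So a tuple-by-tuple coset computation over the survivors can never terminate; what is needed is a uniform argument over an infinite family.

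That uniform argument is exactly what the paper's proof supplies and what your proposal is missing. The embedding conditions are used not to enumerate tuples but to extract parametric lower bounds: Lemma~\ref{lem2} shows that a 0-pair tuple satisfying the conditions must have either $d\geq 4a+b$ and $e\geq a+b+c$, or $d,e\geq a+b+c$ (with Lemma~\ref{lem3} giving the sharper bound $e\geq 4a+4b+c$ when needed). These bounds then feed quantitatively into Coset Condition~I: one computes $|\mathcal{R}|=\sqrt{|\det K|}$ with $\det K=-abcd-abce+abde+acde+bcde$, while $|\mathcal{H}|=ab+ac+bc+a+b+c+1$ depends only on $a,b,c$, and the lower bounds on $d,e$ make $|\det K|$ grow fast enough that $|\mathcal{R}|^2>|\mathcal{H}|^2$ for \emph{every} 0-pair tuple at once, contradicting sliceness. (In the example above, $|\mathcal{R}|=1025$ while $|\mathcal{H}|=87$.) Note also that your reduction treats all parameters uniformly, but the knots with single-twists (some parameter equal to $1$) genuinely require the refined bound of Lemma~\ref{lem3} and a separate three-case check, since the generic inequality is not strong enough when $a$, or $a$ and $b$, or $a$, $b$, and $c$ equal $1$.
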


Coset Condition I fails to obstruct sliceness in $t$-pair, odd, $k$-stranded pretzel knots $K$ if $t \geq 1$, $k$ is odd and $\sigma(K) = 0$, so yet another tool is required in our present case. When $k \geq 5$, the increased number of twist parameters introduces complexity not present when $k = 3$, requiring a more refined `count'  than Jabuka and Greene made themselves when implementing the $d$-invariant obstruction. With just a little bit of work we derive a stronger version of Coset Counting Condition I, and uncreatively deem it Coset Counting Condition II. Combining the signature obstruction, the Lattice Embedding Condition, and Coset Counting Condition II, we prove:

\begin{theorem}
\label{thm3}
If $K$ is a 1-pair, odd, 5-stranded pretzel knot without single-twists, then $K$ is not slice.
\end{theorem}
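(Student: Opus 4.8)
The plan is to run all three obstructions in sequence, mirroring the proof of Theorem \ref{thm2} but substituting Coset Counting Condition II for Condition I, the latter being inert once a canceling pair is present. Write $K = P(p_1,\dots,p_5)$ with its single canceling pair $p_i = -p_j$. Since the double branched cover $Y$ depends only on the unordered tuple (by Bedient \cite{bedient}) and all three obstructions factor through $Y$, I may reorder the parameters freely and, if needed, pass to the mirror of $K$. The standing hypothesis that $K$ has no single-twists is what I would use to guarantee that the canonical plumbing $P$ with $\partial P = Y$ is genuinely definite, with every vertex weight at least $2$ in absolute value; this is exactly the input the diagonalization argument requires.

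First I would apply Theorem \ref{thm:sthm}. If $K$ is slice then $\sigma(K) = 0$, whence $s = \text{sgn}(\hat e)$. Because the canceling pair contributes $0$ to $s$, the quantity $s$ equals the net sign-count of the three non-canceling odd parameters and so lies in $\{\pm 1, \pm 3\}$; matching it against $\text{sgn}(\hat e) \in \{-1,0,1\}$ forces $s = \pm 1$, and after mirroring I take $s = 1$, i.e.\ exactly two of the three non-canceling parameters are positive. This step is immediate and merely trims the case list.

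Next I would set $X = P \cup_Y W$, where $W$ is a rational homology ball bounded by $Y$ supplied by sliceness. Using the definiteness of $Q_X$ proved in Section \ref{donald}, Donaldson's Diagonalization Theorem forces the plumbing lattice $(H_2(P),Q_P)$ to embed into the standard diagonal lattice $(\mathbb{Z}^N,\mathrm{Id})$ of equal rank. Writing this embedding out on the basis coming from the star-shaped plumbing graph yields a system of Diophantine relations among $p_1,\dots,p_5$, and the bulk of the labor is the arithmetic that shows these relations fail for all but a short, explicit list of residual families.

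Finally I would eliminate the residual families with Coset Counting Condition II: for each surviving tuple I would compute the two coset counts arising from the long exact sequences of $(X,W)$ and $(P,Y)$ and exhibit a mismatch, contradicting the existence of $W$. The main obstacle lies precisely here. In contrast to the 0-pair case of Theorem \ref{thm2}, a canceling pair lets infinitely many tuples survive the embedding test, and Condition I cannot separate them; the genuinely new content is confirming that the sharper Condition II closes every remaining case. I expect the delicate point to be organizing the plumbing-graph computation so that the refined count stays tractable across all sign patterns left open after the signature step, and verifying that no admissible tuple both evades the embedding obstruction and balances the Condition II count.
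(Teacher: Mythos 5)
Your outline matches the paper's strategy step for step --- signature first, then the Lattice Embedding Condition, then Coset Counting Condition II --- but it stops exactly where the mathematical content of the theorem begins. You correctly observe that infinitely many 1-pair tuples survive the embedding test (indeed, for the pair $\{a,-a\}$ the conditions force $\alpha = 1$, $\beta = \gamma = x = 0$ and then \emph{any} $y + z = 1$ with $e = by^2 + cz^2$ passes), and you correctly identify that Condition II must therefore do all the work. But you then write that you ``would compute the two coset counts \dots and exhibit a mismatch'' and that you ``expect the delicate point'' to be making this tractable --- that expectation is the proof, and it is not carried out. Note also an internal inconsistency: early on you claim the embedding arithmetic leaves only ``a short, explicit list of residual families,'' which is false in the 1-pair case and contradicted by your own later remark about infinitely many survivors.

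What the paper actually does to close this gap is a three-case analysis according to which parameter sits in the canceling pair, $\{a,-a\}$, $\{b,-b\}$, or $\{c,-c\}$ (with $a \leq b \leq c$ and $K = P(-a,-b,-c,d,e)$). In each case the Embedding Conditions pin down one of the two vectors spanning $\mathcal{R}$ to a short, rigid form --- $\widetilde{v_1} = (a,0)$, $(0,b)$, or $(-c,-c)$ respectively --- which has two consequences: first, $|\mathcal{R}| = a\,|by - cz|$ (resp.\ $b\,|ax-cz|$, $c\,|ax-by|$) is minimized by an explicit shortest choice of the remaining vector, giving $|\mathcal{R}| \geq 2ab+ac$ (resp.\ $2ab+bc$, $2ac+bc$); second, collapsing the lattice points of $\mathcal{H}$ along translations by $\widetilde{v_1}$ alone yields the upper bound $|\bar{\mathcal{H}}| \leq ab+ac+a$ (resp.\ $ab+bc+b$, $ac+bc+c$). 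Since $a,b,c \geq 3$, each lower bound strictly exceeds the corresponding upper bound, violating Condition II. There is also a subtlety your plan would miss in the $\{c,-c\}$ case: the embedding conditions there need not force $\gamma = 1$, because $c = a\alpha^2 + b\beta^2$ is possible with $\alpha,\beta$ both nonzero; the paper handles this by showing such knots fall under the bounds of Lemma \ref{lem2} and hence under the Theorem \ref{thm2} argument. Without these explicit minimizations, coset collapses, and the fallback to the 0-pair argument, the proposal is a roadmap rather than a proof.
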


Theorem \ref{thm3} avoids mention of odd, 5-stranded pretzel knots {\it with} single-twists because they behave slightly differently from those without single-twists for the following reason: any strand with exactly one positive or negative half twist can be transposed with an adjacent strand through a flype as in Figure \ref{flype}. Such a move preserves the smooth knot type so, for example, $P(1,3, -5, 1, -7)$ and $P(1,1,3, -5, -7)$ are not only mutants of one another but also members of the same smooth isotopy class. 

\begin{figure}
\includegraphics[height=400pt]{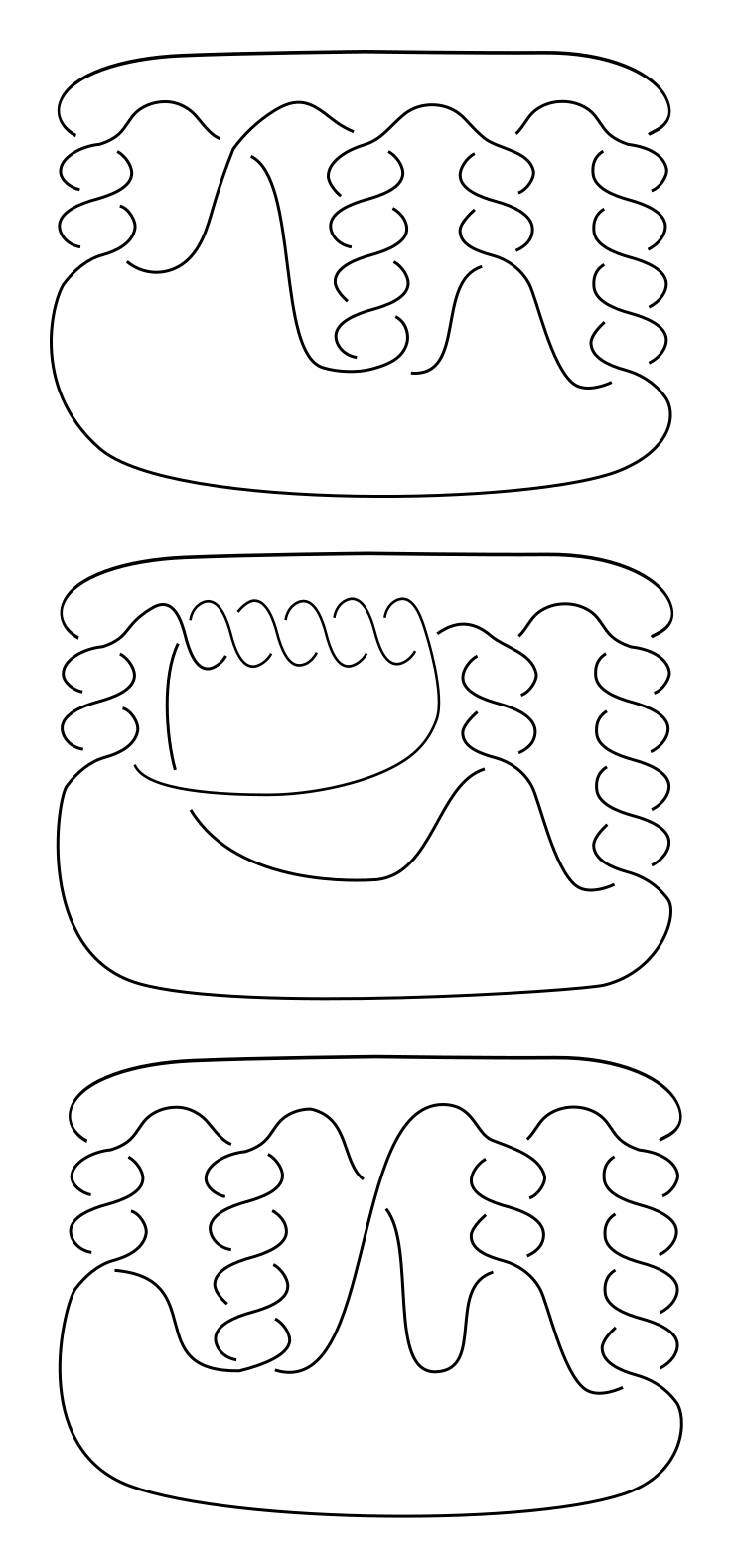}
\caption{Transposition of a single-twist strand, turning $P(3, 1, 5, -3, -5)$ into $P(3,5,1,-3, -3)$.}
\label{flype}
\end{figure}

Furthermore, by flyping we can always ``collect" all strands with $\pm 1$ twists so that they occur in succession. This has the greatest impact on 1- and 2-pair pretzel knots for which at least one of the pairs is $\{-1,1\}$. If $K$ is defined by $\{-1,1,b,c,d\}$, then $K$ is concordant to $P(b,c,d)$ regardless of the initial locations of 1 and -1 in the 5-tuple. It follows that every 2-pair, odd, 5-stranded pretzel knot containing the pair $\{-1,1\}$ is simple ribbon. To contrast, if $K \in P\{-a, a, b,c,d\}$ with $|a| \geq 3$, then $K$ is smoothly concordant to $P(b,c,d)$ if and only if the pair $\{-a,a\}$ is adjacent; it is precisely this fact that leads to $P(3, 5, -3, -5, 7)$ and $P(3,-3, 5,-5,7)$ having different smooth concordance order.

Theorem 2 and Theorem 3 together imply that odd, 5-stranded pretzel knots without single-twists satisfy a weaker version of the Slice-Ribbon Conjecture. Define a knot $K$ to be {\it mutant ribbon} if $K$ is a mutant of a simple ribbon knot. Then we have:

\begin{corollary}
\label{mutantribbon}
If $K$ is a slice, odd, 5-stranded pretzel knot without single-twists, then $K$ is mutant ribbon.
\end{corollary}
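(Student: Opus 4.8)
The plan is to prove Corollary~\ref{mutantribbon} by combining Theorems~\ref{thm2} and~\ref{thm3} with the structural classification of odd, 5-stranded pretzel knots into $0$-pair, $1$-pair, and $2$-pair types, together with the signature obstruction from Theorem~\ref{thm:sthm}. Let $K$ be a slice, odd, 5-stranded pretzel knot without single-twists. Since $K$ is slice, we have $\sigma(K) = 0$, and since $K$ is $5$-stranded and odd it must fall into exactly one of the three pair-types. The strategy is to rule out the $0$-pair and $1$-pair cases outright, leaving only the $2$-pair case, and then to argue that every $2$-pair knot of this form is mutant ribbon.

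First I would dispatch the $0$-pair and $1$-pair cases. If $K$ were $0$-pair, Theorem~\ref{thm2} says $K$ is not slice, a contradiction; so $K$ is not $0$-pair. If $K$ were $1$-pair, then since $K$ has no single-twists, Theorem~\ref{thm3} applies and again says $K$ is not slice, a contradiction. Hence $K$ must be $2$-pair. This is the step where the hypothesis ``without single-twists'' is essential, since Theorem~\ref{thm3} is stated only for knots without single-twists; I would note explicitly that this is exactly why the corollary carries that hypothesis.

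The remaining work is to show that a $2$-pair, odd, $5$-stranded pretzel knot is mutant ribbon. By definition of $2$-pair, the twist parameters of $K$ consist of two canceling pairs $\{-a, a\}$ and $\{-b, b\}$ together with a fifth parameter $c$; that is, $K \in P\{-a, a, -b, b, c\}$ as an unordered multiset. The key observation is that the simple ribbon knot $P(-a, a, -b, b, c)$ (with the canceling pairs placed adjacently) has exactly this unordered tuple of twist parameters, so it is a pretzel knot mutant of $K$ by the definition of pretzel knot mutation given in the introduction. Since $P(-a, a, -b, b, c)$ can be reduced to a $1$-stranded pretzel knot by successively applying simple ribbon moves to its two adjacent canceling pairs, it is simple ribbon. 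Therefore $K$ is a mutant of a simple ribbon knot, i.e. $K$ is mutant ribbon, which completes the proof.

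The main obstacle here is not the logical skeleton, which is a short case analysis, but rather ensuring that the $2$-pair case genuinely always yields a mutant ribbon knot. The subtlety is that the definition of $t$-pair allows the canceling pairs to be identified only after removing one pair at a time, so I would want to verify carefully that a $2$-pair tuple really does decompose as two disjoint canceling pairs plus a single leftover parameter, and that the associated adjacent-pair arrangement is a legitimate pretzel knot (odd, hence requiring $c$ odd, which is automatic). I expect this to be routine given the definitions already set up in the introduction, but it is the one place where the argument depends on unpacking the recursive ``$t$-pair'' definition rather than merely citing the two theorems.
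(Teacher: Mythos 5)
Your proof is correct and follows exactly the paper's intended argument: the paper derives Corollary~\ref{mutantribbon} precisely by using Theorem~\ref{thm2} to eliminate the $0$-pair case and Theorem~\ref{thm3} to eliminate the $1$-pair case (which is where the ``without single-twists'' hypothesis enters), leaving the $2$-pair case, whose unordered tuple $\{-a,a,-b,b,c\}$ admits the adjacent-pair arrangement $P(-a,a,-b,b,c)$ as a simple ribbon mutant. Your additional care in unpacking the recursive $t$-pair definition is a reasonable elaboration of details the paper leaves implicit, not a departure from its route.
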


For 2-pair, odd, 5-stranded pretzel knots (with or without single-twists) not containing the pair $\{-1,1\}$ and for 1-pairs with single-twists and pair $\{-a,a\} \neq \{-1,1\}$, the signature, the Lattice Embedding Condition, and both Coset Counting Conditions I \& II all fail to obstruct sliceness in the knots that are not simple ribbon because these slice obstructions, at their cores, obstruct the double branched covers of the knots from having certain properties. As previously mentioned, all mutants of a given pretzel knot share the same double branched cover and hence there is no hope of obstructing sliceness for a knot $K \in P\{a,b,c,d,e\}$ if any member of $P\{a,b,c,d,e\}$ is slice. Since 2-pair knots of the form $P(a,-a, b,-b, c,)$ and $P(a,b,-b,-a,c)$ are simple ribbon and therefore slice, we cannot use the aforementioned tools to say that $P(a,b,-a,-b,c)$ is not slice. Similarly, Remark 1.3 in \cite{greene-jabuka} gives that the 1-pair knots with single-twists and pair $\{-a,a\} \neq \{-1,1\}$ of the form $P(a, -a, 1, b,c)$ with $b + c = 4$ are slice, and therefore again there is no way to distinguish between slice and suspected non-slice members of $P\{a,-a, 1,b,c\}$. 

In \cite{HKL}, the authors used twisted Alexander polynomials to show that the 2-pair knot $P(3, 5, -3, -5, 7)$ is not slice, despite being mutant to the simple ribbon knot $P(3,-3,5,-5,7)$. Twisted Alexander polynomials are able to distinguish mutants and, in fact, they can reveal when a knot is not {\it topologically} slice. The issue in using twisted Alexander polynomials to show that odd, 5-stranded pretzel knots satisfy the Slice-Ribbon Conjecture is that these polynomials are difficult to compute. Of the examples computed for pretzel knots to date, there is only one infinite family of pretzel knots whose slice status has been determined using twisted Alexander polynomials. It is the 4-stranded family $K=P(2n,m,-2n\pm 1,-m)$, done by Allison Miller in \cite{miller}.

\noindent {\it Acknowledgements.} I, the author, would like to thank my advisor Paul Melvin for his patience and guidance throughout this project. I am forever indebted to him for not only passing along his knowledge of this subject, but also for his invaluable edits of my work, both in style and content.  

\vspace{.05in}

\section{Framed Links, Weighted Graphs, and Plumbings} \label{graphbasics}
Let $K = P(a_1, ..., a_p, -b_1, ..., -b_n)$ be an odd, $k$-stranded pretzel knot with $k = p + n$ odd, and let $Y$ be the double branched cover of $S^3$ along $K$. As a 3-manifold, we will describe $Y$ by two framed links $L_0$ and $L_+$, which are represented by weighted star-shaped graphs $\Gamma_0$ and $\Gamma_+$, shown in Figure \ref{stargraphs}. In $\Gamma_0$ and $\Gamma_+$, each vertex $v_i$ with weight $w(v_i)$ represents an unknot component $K_i$ with framing $r_i = w(v_i)$; two components $K_i$ and $K_j$ link once in $L_0$ [resp. $L_+$] if the corresponding vertices $v_i$ and $v_j$ share an edge in $\Gamma_0$ [resp. $\Gamma_+$]. 

\begin{figure}
\includegraphics[height=175pt]{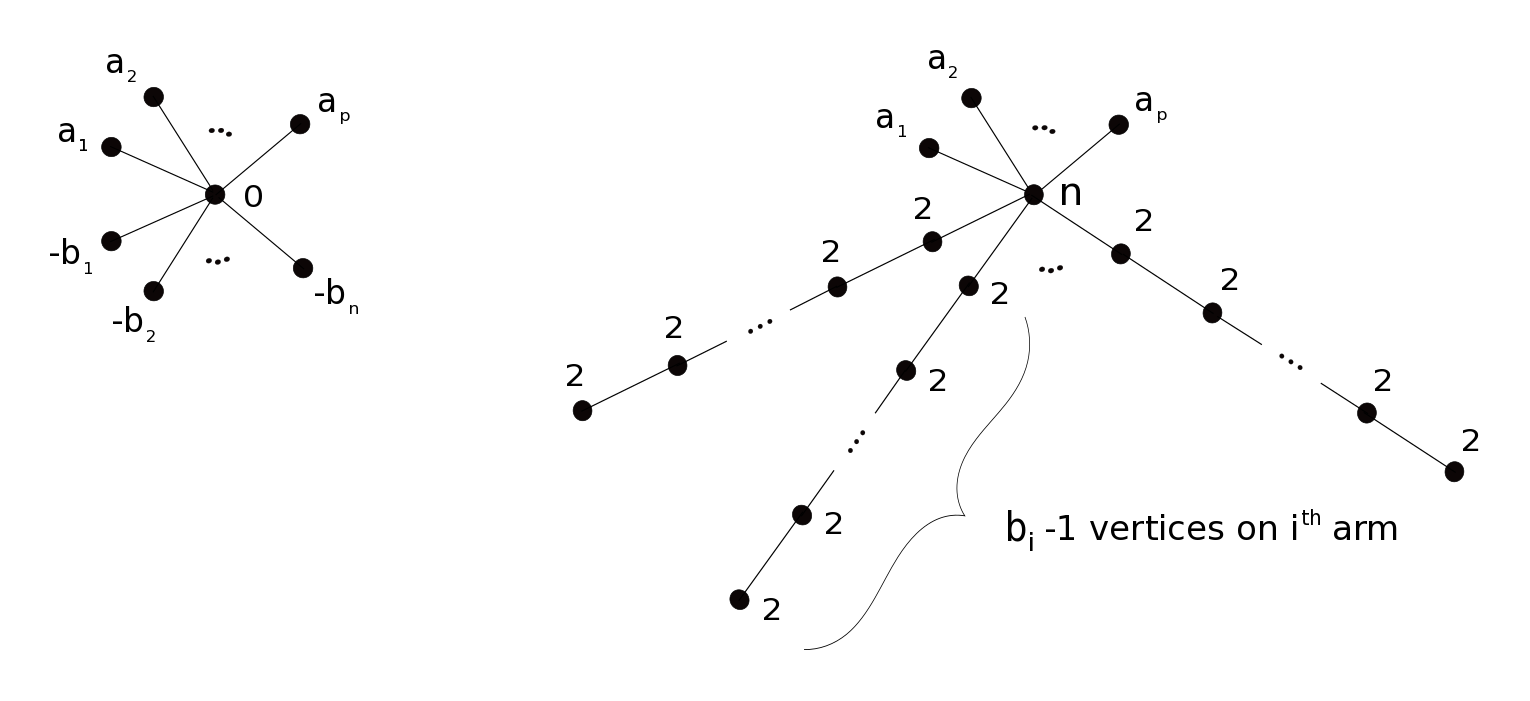}
\caption{Weighted plumbing graphs $\Gamma_{L_0}$ (left) and $\Gamma_{L_+}$ (right).}
\label{stargraphs}
\end{figure}

The links $L_0$ and $L_+$ differ by a sequence of Kirby moves. To transform $L_0$ into $L_+$ via Kirby moves, one operates on the negatively-framed components of $L_0$. The course of Kirby moves needed to change $L_0$ into $L_+$ is described in Figure \ref{kirbycalc} as a sequence of steps that is performed on each negatively-framed component of $L_0$. The accompanying weighted graphs are shown as well. If the prescribed sequence of moves is performed on a component with framing $-b_i$, then Step 5 is repeated $b_i - 3$ times and the original component is ultimately replaced by $b_i - 1$ new components, all unknots with framing 2. In the corresponding weighted graphs, this translates into replacing a single arm of length one, whose lone vertex has weight $-b_i$, by an arm of length $b_i - 1$ containing all weight-2 vertices; the weight of the central vertex increases by 1 for each arm altered.

\begin{figure}
\includegraphics[height=450pt]{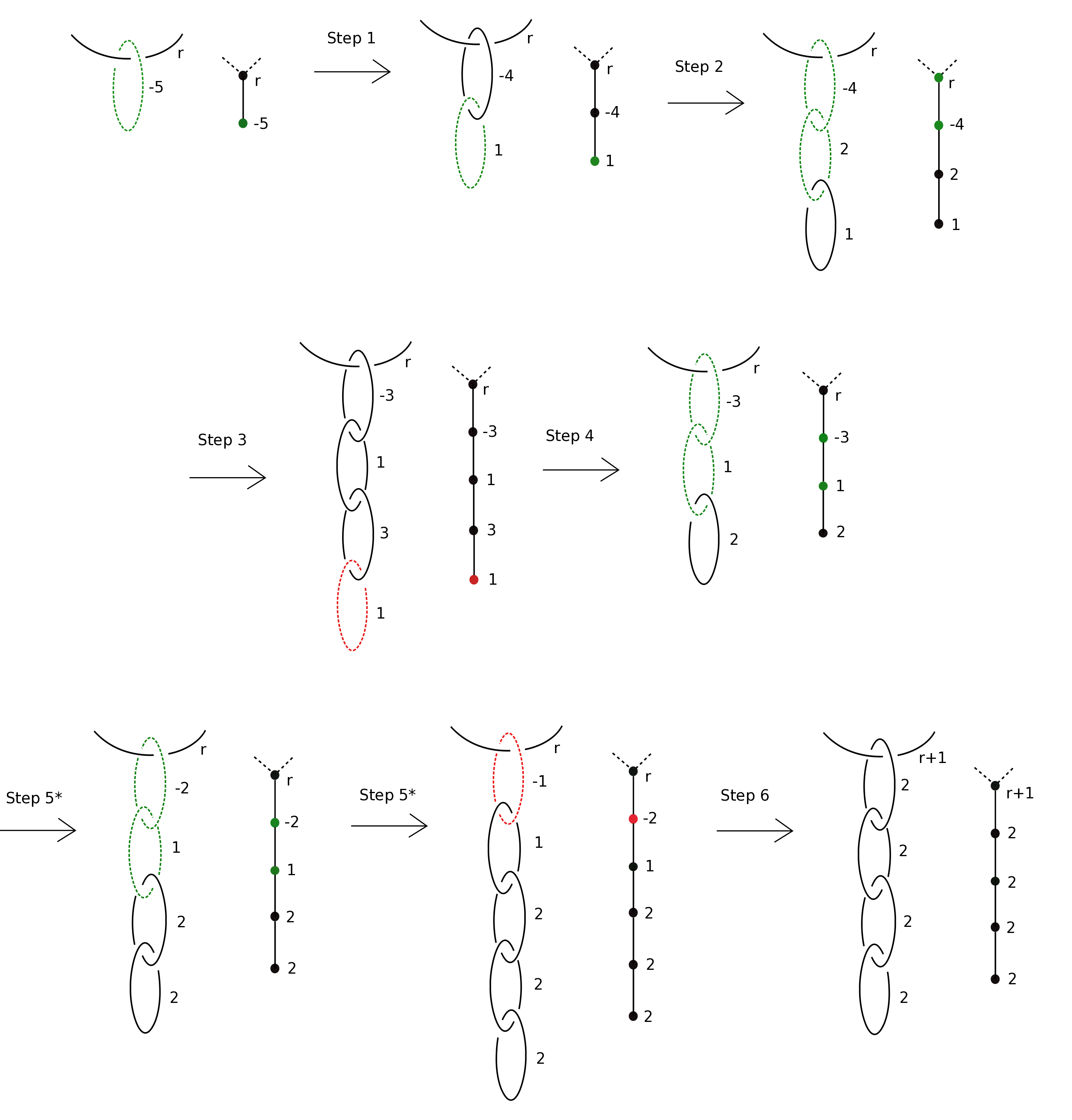}
\caption{Kirby Calculus Sequence $L_0 \rightarrow L_+$}
\label{kirbycalc}
\end{figure}

In addition to describing $Y$, the framed links $L_0$ and $L_+$ and weighted graphs $\Gamma_0$ and $\Gamma_+$ define 4-dimensional plumbings $P_0$ and $P_+$ (respectively) that bound $Y$. Each framed unknot (or weighted vertex) represents a $D^2$ bundle over $S^2$ with Euler class equal to the framing (or weight); linking between components (or an edge between vertices) indicates that the corresponding disk bundles are plumbed together. For more information on the plumbing construction, see \cite{gompf}.

The intersection form $Q_{P_0}$ for $P_0$, when represented as a matrix with basis equal to the set of classes represented by the zero-sections of the above disk bundles, is equal to the incidence matrix for $\Gamma_0$. Similarly, the intersection form of $P_+$, when represented as a matrix using the basis with elements consisting of the homology classes represented by the zero-sections of the above disk bundles, is equal to the incidence matrix for $\Gamma_+$. Knowing the exact sequence of Kirby moves between $L_0$ and $L_+$ allows us to analyze how the signature changes in the underlying 4-manifolds after each step, in particular the overall change in signature from $P_0$ to $P_+$. 

At this point, it is worth detailing a labeling scheme for the vertices of $\Gamma_0$ and $\Gamma_+$ so that the bases for the incidence matrices are ordered consistently. Given the vertex labelings pictured in Figure \ref{vertexlabels}, $\Gamma_0$ will have ordered basis $\{v_0, v_1, ... , v_p, v_{p+1}, ... , v_{p+n}\}$ and $\Gamma_+$ will have ordered basis $\{v_0, v_1, ... , v_p, v_{1,1}, ..., v_{1, b_1 - 1}, ... , v_{n,1}, ... , v_{n, b_n - 1}\}$. Written succinctly, the basis for $\Gamma_+$ can be written $\{v_i, v_{j,r_j}\}$ where $0 \leq i \leq p$, $1 \leq j \leq n$, and $1 \leq r_j \leq j - 1$. It is with these ordered bases for $\Gamma_0$ and $\Gamma_+$ that the matrices for $Q_{P_0}$ and $Q_{P_+}$ are given later.

\begin{figure}
\includegraphics[height=175pt]{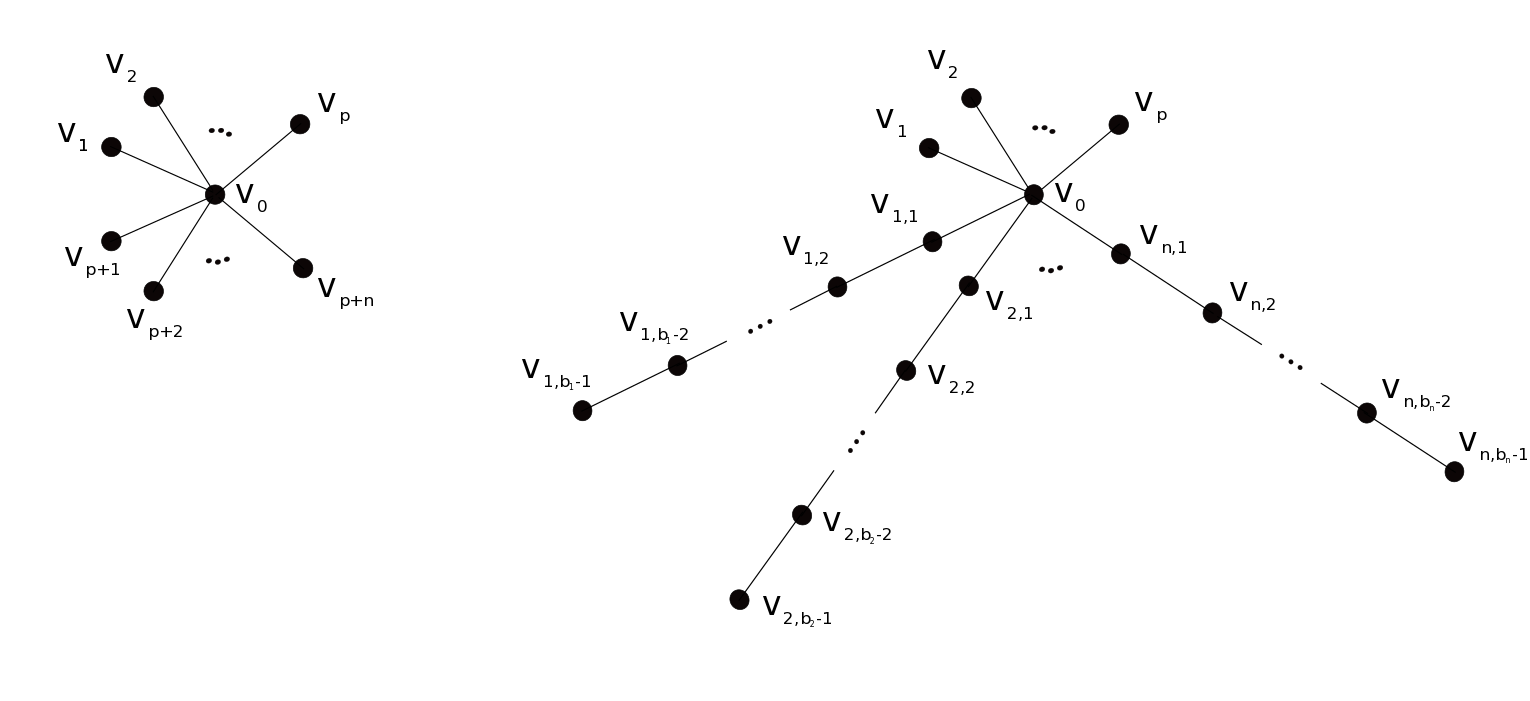}
\caption{Vertex labeling scheme for $\Gamma_0$ (left) and $\Gamma_+$ (right).}
\label{vertexlabels}
\end{figure}

\vspace{.05in}

\section{The Signature Condition and Proof of Theorem 1} \label{signature}
The {\it signature} of a symmetric matrix $Q$, denoted $\sigma(Q)$, is the difference between the number of positive diagonal entries and the number of negative diagonal entries of $Q$, after $Q$ has been diagonalized over $\mathbb{R}$. The {\it signature of a knot} $K$  is defined as $\sigma(K) = \sigma(V^T + V)$, where $V$ is a Seifert matrix for $K$. Given a 4-manifold $X$ with intersection form $Q_X$, the {\it signature of $X$} is defined as the signature of $Q_X$: $\sigma(X) = \sigma(Q_X)$. The signature is an abelian invariant based on the double branched cover of the knot, therefore it cannot detect pretzel mutants. 

The signature is a homomorphism $\sigma : \mathcal{T} \rightarrow \mathbb{Z}$, where $\mathcal{T}$ is the topological knot concordance group. Hence:
\begin{enumerate}
	\item $\sigma(-K) = -\sigma(K)$, where $-K$ is the mirror of $K$, and
	\item $\sigma(K_1 \;  \# \; K_2) = \sigma(K_1) + \sigma(K_2)$.
\end{enumerate}

The signature is also invariant under mutation [c.f. \cite{kearton}]. For pretzel knots, if we combine this fact with (1) we see that computation of the signature of $K = P(p_1, ..., p_k)$ may be obtained using any knot in $P\{p_1, ..., p_k\}$. Often, a specific $K$ is chosen in order to make computations as simple as possible. Homomorphism property (2) implies that if $\sigma(K) > 0$, then the knot $K$ will have infinite order in the topological (and therefore smooth) knot concordance group. A classical theorem relating signatures and sliceness is the following:

\begin{theorem*}
If a knot $K$ is slice, then $\sigma(K) = 0$. 
\end{theorem*}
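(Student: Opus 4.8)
The plan is to reduce the theorem to a purely algebraic statement about the Seifert form and then finish with linear algebra. Specifically, I would prove that sliceness forces the Seifert form $V$ to be \emph{metabolic}: there is a direct-summand subgroup $G \subseteq H_1(F) \cong \mathbb{Z}^{2g}$ of rank $g$ (a \emph{metabolizer}) on which $V$ vanishes identically, i.e. $x^T V y = 0$ for all $x,y \in G$. Granting this, the symmetrized form $V + V^T$ also vanishes on $G$, and since $\det(V + V^T) = \pm\,\Delta_K(-1)$ equals $\pm\det(K)$, which is odd and hence nonzero, the form $V + V^T$ is a nondegenerate symmetric form over $\mathbb{Q}$ admitting a $g$-dimensional totally isotropic subspace. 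I will then show this forces $\sigma(V + V^T) = 0$, which is exactly $\sigma(K) = 0$.

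For the topological metabolizer step, I would choose a Seifert surface $F$ of genus $g$ for $K$, push its interior into the open $4$-ball so that $F \subset B^4$ with $\partial F = K \subset S^3$, and set $W = B^4 \setminus N(D)$, the complement of a tubular neighborhood of the slice disk $D$. An Alexander-duality (or Mayer--Vietoris) computation gives $H_1(W;\mathbb{Q}) \cong \mathbb{Q}$, generated by a meridian of $D$. I would then set $G = \ker\!\bigl(H_1(F;\mathbb{Q}) \to H_1(W;\mathbb{Q})\bigr)$, taking its integral points to obtain a direct summand. The ``half-lives-half-dies'' principle---applied to the long exact sequence of the pair $(W,F)$ together with Poincar\'e--Lefschetz duality for $W$---shows $\operatorname{rank} G = g$. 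Finally, for $x,y \in G$ the Seifert pairing $V(x,y) = \operatorname{lk}(x^+, y)$ is a linking number computed in $S^3$; because $x$ and $y$ bound surfaces in $W$, these linking numbers (including the self-pushoff contributions) vanish, so $V|_G = 0$.

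For the linear-algebra step, let $A = V + V^T$, a nondegenerate symmetric form on a $2g$-dimensional space with signature $\sigma(A) = p - q$ and $p + q = 2g$. A subspace on which $A$ vanishes is totally isotropic, and for a nondegenerate symmetric form the dimension of any totally isotropic subspace is at most $\min(p,q)$. Since $G$ has dimension $g$, we obtain $g \le \min(p,q)$; combined with $p + q = 2g$ this forces $p = q = g$, whence $\sigma(A) = \sigma(K) = 0$.

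I expect the main obstacle to be the topological metabolizer step, in two places: (i) pinning down $\operatorname{rank} G = g$ via half-lives-half-dies, which requires the duality computation of $H_*(W)$ and care that $G$ is a genuine direct summand (so that it has the correct rank as a subgroup, not merely the correct $\mathbb{Q}$-dimension); and (ii) justifying that the Seifert linking form vanishes on $G$, i.e. that classes null-homologous in the disk complement have vanishing mutual and self-pushoff linking numbers. The linear-algebra step is routine by comparison, and the determinant computation ensuring nondegeneracy of $V + V^T$ may be quoted as a standard fact.
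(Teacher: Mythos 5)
The paper itself gives no proof of this statement (it simply cites Rolfsen), so the only question is whether your argument stands on its own. Your overall strategy---slice $\Rightarrow$ metabolic Seifert form $\Rightarrow$ signature zero---is the standard classical route, and your linear-algebra half is fine: $\det(V+V^T)=\pm\Delta_K(-1)=\pm\det(K)$ is odd, hence nonzero, and a nondegenerate symmetric form of signature $(p,q)$ with $p+q=2g$ admitting a totally isotropic subspace of dimension $g$ must have $p=q=g$. The problem is the topological half, and it is fatal as stated. With $W=B^4\setminus N(D)$ you correctly get $H_1(W;\mathbb{Q})\cong\mathbb{Q}$ generated by a meridian of $D$; but every curve $x$ in the interior of a Seifert surface $F$ has $\operatorname{lk}(x,K)=0$ (push $x$ off $F$ to make it disjoint from $F$, then intersect with $F$), so the composite $H_1(F;\mathbb{Q})\to H_1(S^3\setminus N(K);\mathbb{Q})\to H_1(W;\mathbb{Q})$ is identically zero. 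Hence your $G=\ker\bigl(H_1(F;\mathbb{Q})\to H_1(W;\mathbb{Q})\bigr)$ is \emph{all} of $H_1(F;\mathbb{Q})$, of rank $2g$, not $g$. Half-lives-half-dies cannot rescue this: $F$ is not the boundary of $W$ (the boundary of $W$ is the $0$-surgery manifold on $K$), and the Lagrangian-kernel statement you want holds for a \emph{3-manifold bounded by a surface}, not for a 4-manifold and a surface sitting inside it (compare $W=D^2\times S^2$ and $W=S^1\times B^3$, where the kernel of $H_1(\partial W;\mathbb{Q})\to H_1(W;\mathbb{Q})$ is everything and nothing, respectively).

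The vanishing step fails for the same reason and in fact proves too much: since every curve on $F$ is null-homologous in $W$, every curve on $F$ bounds a surface in $W$, so your argument would force $V\equiv 0$ on all of $H_1(F)$ for every slice knot---contradicting the nondegeneracy of $V+V^T$ that you yourself established (and contradicting, e.g., the stevedore knot). The gap is that bounding surfaces in a 4-manifold does not kill Seifert linking numbers: intersection numbers of surfaces \emph{with boundary} are not homological invariants, and $\operatorname{lk}(x^+,y)$ equals an intersection number of bounding surfaces that you have no way to make disjoint. The standard repair is to work with the closed surface $\Sigma = F\cup_K D$ (with $F$ pushed into $B^4$), invoke the fact that a closed orientable surface in $S^4$ bounds an embedded compact orientable 3-manifold $M$ (a ``Seifert solid''), define $G=\ker\bigl(H_1(\Sigma;\mathbb{Q})\to H_1(M;\mathbb{Q})\bigr)$, apply half-lives-half-dies to the 3-manifold $M$ (whose boundary really is $\Sigma$) to get $\operatorname{rank} G = g$, and obtain vanishing of the Seifert pairing by pushing surfaces contained in $M$ off of $M$ along its normal direction, which makes the relevant bounding surfaces disjoint. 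Alternatively, one can bypass Seifert forms entirely via double branched covers, Novikov additivity, and the fact that $\Sigma_2(B^4,D)$ is a $\mathbb{Z}/2\mathbb{Z}$-homology ball---the Kauffman--Taylor approach, which is the machinery this paper already quotes in its proof of Theorem 1.
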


\noindent The proof of this can be found in \cite{rolfsen}, and with this result we are now ready to prove Theorem \ref{thm:sthm}. 

\noindent {\bf Proof of Theorem 1.}
Let $K = P(a_1, ..., a_p, -b_1, ..., -b_n)$ be an odd pretzel knot. Kauffman and Taylor prove in \cite{Kauff2} that $\sigma(K) = \sigma(Q_{S})$, where $S$ is the double branched cover of $B^4$ along any Seifert surface of $K$. The plumbing manifold $P_0$ described in Section 2 is the double branched cover of $B^4$ along the obvious Seifert surface for $K$, therefore $\sigma(K) = \sigma(Q_{P_0})$. Recall that $P_0$ is described by the graph $\Gamma_0$, and thus $Q_{P_0}$ (with appropriate choice of basis) is given by the incidence matrix of $\Gamma_0$. A straightforward diagonalization of $Q_{P_0}$ shows that:
\[ Q_{P_0} =
\begin{bmatrix}
0 & 1 & 1 & 1 & 1 & 1 & 1\\
1 & a_1 & 0 & 0 & 0 & 0 & 0\\
1 & 0 & \ddots & 0 & 0 & 0 & 0\\
1 & 0 & 0 & a_p& 0 & 0 & 0 \\
1 & 0 & 0 & 0  & -b_1 & 0 & 0\\
1 & 0 & 0 & 0 & 0 & \ddots & 0\\
1 & 0 & 0 & 0 & 0 & 0 & -b_n   
\end{bmatrix}
\sim
\begin{bmatrix}
-\hat{e} & 0 & 0 & 0 & 0 & 0 & 0\\
0 & a_1 & 0 & 0 & 0 & 0 & 0\\
0 & 0 & \ddots & 0 & 0 & 0 & 0\\
0 & 0 & 0 & a_p & 0 & 0 & 0\\
0 & 0 & 0 & 0 & -b_1 & 0 & 0\\
0 & 0 & 0 & 0 & 0 & \ddots & 0\\
0 & 0 & 0 & 0 & 0 & 0 & -b_n 
\end{bmatrix}
\]

Therefore, $\sigma(K) = \sigma(Q_{P_0}) = s - \text{sgn}(\hat{e})$.
\qed

Let $a, b, c,d,e \geq 3$. As mentioned in Section \ref{results}, Theorem \ref{thm:sthm} shows non-sliceness for all odd, 5-stranded pretzel knots $K$ in $P\{a, b, c, d, e\}$ and in $P\{a, b, c, d,-e\}$, since $s$ fails to equal $\pm 1$. But, it also shows non-sliceness for all $K$ in $P\{a, b, c, -d, -e\}$ for which $\frac{1}{a} + \frac{1}{b} + \frac{1}{c} > \frac{1}{d} + \frac{1}{e}$. For example, $K = P(5, 5, 5, -3, -3)$ has nonvanishing signature by Theorem \ref{thm:sthm} and is therefore not slice. 

\vspace{.05in}

\section{Donaldson's Diagonalization Theorem and the Lattice Embedding Condition} \label{donald}

\def\tor{\textup{Tor}}

Donaldson's Diagonalization Theorem constitutes a small piece of the larger topic of Yang-Mills gauge theory. It remains one of the most significant results in the geography problem of 4-manifolds, and it has useful applications to other areas of low-dimensional topology. Donaldson's Diagonalization Theorem can be used to obstruct knot sliceness and it is with this goal in mind that we call on it here. Recall that a closed, oriented 4-manifold $X$ has a unimodular intersection form $Q_X: H_2(X)/\tor \otimes H_2(X)/\tor \rightarrow \mathbb{Z}$\footnote{Here, Tor denotes the torsion part of $H_2(X)$.}, and that $Q_X$ is {\it definite} if $|\sigma(Q_X)| = \text{rk}(Q_X)$. Then:

\begin{theorem*}[Donaldson, 1987]
Let $X$ be a smooth, closed, oriented, 4-manifold with positive definite intersection form $Q_X$. Then $Q_X$ is equivalent over the integers to the standard diagonal form, so in some base:
$$
Q_{X}(u_1,u_2, .\ .\ .\ ,u_r) = u_1^2 + u_2^2 + \cdots + u_r^2
$$
\end{theorem*}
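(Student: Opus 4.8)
The plan is to prove the equivalent negative-definite statement and then invoke Donaldson's gauge-theoretic machinery. Reversing the orientation of $X$ replaces $Q_X$ by $-Q_X$ and does not affect diagonalizability, so I would assume $Q_X$ is negative definite of rank $n$ and aim to show $Q_X \cong \mathrm{diag}(-1,\dots,-1)$, which is the same as $Q_{\bar X}\cong\mathrm{diag}(1,\dots,1)$. A preliminary sequence of surgeries removing the generators of $\pi_1(X)$ produces a simply connected manifold with the same intersection form, so I may also assume $X$ is simply connected. The engine of the proof is the moduli space $\mathcal{M}$ of anti-self-dual (ASD) connections, modulo gauge, on a fixed $SU(2)$-bundle $E \to X$ with $c_2(E)=1$, taken with respect to a Riemannian metric $g$ on $X$.

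The first substantive step is to pin down the local and global structure of $\mathcal{M}$. By the Atiyah--Hitchin--Singer index computation the expected dimension is $8c_2 - 3(1 + b_2^+) = 5$, since $b_2^+ = 0$ for a negative-definite form; a Sard--Smale genericity argument then makes $\mathcal{M}$ a smooth, orientable $5$-manifold away from its singular points. The singular points are exactly the reducible ASD connections, which arise from splittings $E = L \oplus L^{-1}$ with $c_1(L)^2 = -c_2(E) = -1$. Thus reducibles are indexed by the pairs $\{e,-e\}$ of lattice vectors with $Q_X(e,e) = -1$, and there are only finitely many because a definite lattice represents each norm finitely often. Near each reducible, $\mathcal{M}$ is modeled on a cone over $\mathbb{CP}^2$.

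The second substantive step is compactness and the cobordism it yields. By Uhlenbeck's weak compactness theorem, the noncompact ends of $\mathcal{M}$ come from curvature concentrating ("bubbling") at points of $X$, and the corresponding end is a collar diffeomorphic to a copy of $X$. Excising small cone neighborhoods of the reducibles and truncating the bubbling collar gives a compact oriented $5$-manifold $W$ with
\[
\partial W \;=\; X \;\sqcup\; \Big(\coprod_{i=1}^{k}\mathbb{CP}^2\Big),
\]
where $k$ is the number of reducibles, i.e. the number of $\pm$-pairs of $(-1)$-vectors. Since the signature of a boundary vanishes, $\sigma(X) + k\,\sigma(\mathbb{CP}^2) = 0$, and with the orientations induced on $\partial W$ this reads $-n + k = 0$, forcing $k = n$.

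The final step is purely lattice-theoretic and is the easy part. Let $v_1,\dots,v_n$ represent the $n$ pairs of $(-1)$-vectors. Cauchy--Schwarz in the definite lattice gives $|Q_X(v_i,v_j)|^2 \le Q_X(v_i,v_i)\,Q_X(v_j,v_j) = 1$, with equality only if $v_i = \pm v_j$; hence $Q_X(v_i,v_j)=0$ for $i\neq j$, so the $v_i$ are pairwise orthogonal and span a sublattice isometric to $\mathrm{diag}(-1,\dots,-1)$. That sublattice is unimodular of full rank $n$, so it has index $1$ in the (also unimodular) ambient lattice and must coincide with it; in this basis $Q_X$ is diagonal. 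I expect essentially all of the difficulty to live in the two analytic steps — the transversality and orientability of $\mathcal{M}$ for a generic metric, and the Uhlenbeck compactification that identifies the end with a copy of $X$ — since this is precisely the deep Yang--Mills input; once $k = n$ is established, the rest is elementary.
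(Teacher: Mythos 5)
The paper does not prove this statement at all: Donaldson's theorem is quoted as a black box (the ``Donaldson, 1987'' attribution), exactly as Lisca and Greene--Jabuka do, so your sketch can only be judged against the literature it invokes. On that score, your outline of the simply connected case is the standard one -- the ASD $SU(2)$ moduli space with $c_2=1$, reducibles corresponding to pairs $\{\pm e\}$ with $Q_X(e,e)=-1$, cone-on-$\mathbb{C}P^2$ singularities, the Uhlenbeck end giving a collar on $X$, the signature count, and the Cauchy--Schwarz/unimodularity finish -- and the final lattice argument is correct and complete as written.

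The genuine gap is your very first reduction: surgery on generators of $\pi_1(X)$ does \emph{not} in general preserve the intersection form. If a loop $\gamma$ is rationally nullhomologous (unavoidable when $\pi_1$ is finite, perfect, or has image in the torsion of $H_1$), replacing $S^1\times D^3$ by $D^2\times S^2$ raises $b_2$ by $2$ while leaving $\sigma$ unchanged: the belt sphere and a capped-off surface bounded by $\gamma$ span a summand $\left(\begin{smallmatrix} a & 1 \\ 1 & 0 \end{smallmatrix}\right)$, so the resulting manifold has an \emph{indefinite} form and the rest of your argument no longer applies. Nor can you recover the conclusion afterwards, since definite forms do not cancel: $E_8\oplus\langle 1\rangle\oplus\langle -1\rangle$ is diagonalizable while $E_8$ is not. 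This failure matters in precisely the setting of this paper: the theorem is applied to $X=P\cup_Y W$ with $W$ a rational homology ball, so $b_1(X)=0$ and \emph{every} loop of $X$ is rationally nullhomologous, while $\pi_1(X)$ need not vanish (it is a quotient of $\pi_1(W)$, which can be nontrivial for a branched double cover over a slice disk). Removing the simply connected hypothesis is exactly the content of Donaldson's 1987 paper and requires new input -- orientability of the moduli space, and control of the Uhlenbeck ends, which for $\pi_1\neq 1$ involve the full character variety of flat $SU(2)$ connections rather than a single collar copy of $X$ -- not a surgery reduction. A smaller point: your assertion that all reducible links appear as $+\mathbb{C}P^2$ in $\partial W$ needs justification; the robust conclusion from the cobordism is $n=|\sigma(X)|\le k$, which combined with your orthogonality bound $k\le n$ still yields $k=n$, so this one is repairable.
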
  

\begin{remark}
Donaldson's Diagonalization Theorem was originally phrased for $Q_X$ negative definite, making all the $u_i^2$ terms negative. Also, $Q_X$ being definite and diagonalizable means that the pair $(\mathbb{Z}^{b_2(X)},Q_X)$ can be viewed geometrically as a lattice that is isomorphic to $\mathbb{Z}^{b_2(X)}$ with the standard dot product. 
\end{remark}

Donaldson's Diagonalization Theorem is used to obstruct sliceness of a knot in the following way: Assume the knot $K \subset S^3$ is slice and that $Y$ is the 2-fold branched cover of $S^3$ along $K$. Let $P$ be a canonical definite 4-dimensional plumbing manifold satisfying $\partial P = Y$, and let $W$ be the double branched cover of $B^4$ along a slicing disk for $K$. Since $K$ is a knot, $Y$ is a rational homology 3-sphere. Furthermore, $W$ is a rational homology 4-ball with $\partial W = Y$, which follows from the more general fact that the double branched cover of a $\mathbb{Z}/2\mathbb{Z}$-homology ball branched along codimension 2 $\mathbb{Z}/2\mathbb{Z}$-homology ball is again a $\mathbb{Z}/2\mathbb{Z}$-homology ball. A new 4-manifold $X$ is formed by gluing $P$ and $W$ together along their common boundary $Y$ in the usual, orientation-preserving way. This new manifold $X$ will be compact, smooth, oriented, and have definite intersection form, and thus the Diagonalization Theorem applies. This gives that $(\mathbb{Z}^{b_2(X)},Q_X)$ is lattice isomorphic to $(\mathbb{Z}^{b_2(X)},Id)$, the standard $n$-dimensional integer lattice.

The Mayer-Vietoris sequence involving $X = P \cup_Y W$ with rational coefficients shows that $H_2(P)$ includes into $H_2(X)$, and therefore $(\mathbb{Z}^{b_2(P)},Q_P)$ must embed into $(\mathbb{Z}^{b_2(X)},Q_X)$ as a sublattice of full rank. Algebraically, $(\mathbb{Z}^{b_2(P)},Q_P)$ embeds into $(\mathbb{Z}^{b_2(X)},Id)$ if there exists an injection $\alpha:\mathbb{Z}^{b_2(P)} \rightarrow \mathbb{Z}^{b_2(X)}$ such that $Q_P(a,b) = Id(\alpha(a),\alpha(b))$ \cite{greene-jabuka}. If this embedding does not exist then the conclusion is that $X$, as constructed, does not exist. The only assumption made in this construction was that $K$ is slice, therefore the contradiction implies this cannot be the case. Thus, the existence of an embedding $\alpha$ of the lattice $(\mathbb{Z}^{b_2(P)},Q_P)$ into $(\mathbb{Z}^{b_2(X)},Q_X)$ is a necessary condition for the knot $K$ to be slice, which is precisely the obstruction to sliceness utilized in \cite{lisca} and \cite{greene-jabuka}. We call this the Lattice Embedding Condition.


In practice, showing that the embedding $\alpha$ exists amounts to writing down a matrix $A$ for $\alpha$ that satisfies $A^TA = Q_P$. This requires a choice basis for $H_2(P)$ and for $H_2(X)/\tor$. The basis $\{s_i\}$ chosen for $H_2(P)/\tor$ is the set of classes represented by the zero-sections in the disk bundles used to create $P$; the basis $\{e_i\}$ for $H_2(X)/\tor$ is chosen to be one that makes $Q_X$ diagonal by Donaldson's Theorem. As such, each column of $A$ corresponds to one of those 2-spheres in $P$ whose intersection information is recorded by the plumbing graph of $P$. That is, the columns of $A$ must have standard dot products consistent with the information given by the plumbing graph for $P$. 

In an attempt to use Donaldson's Diagonalization Theorem to obstruct sliceness of an odd pretzel knot $K$, we refer back to the Section \ref{graphbasics} and take $P = P_+$, which has plumbing graph $\Gamma_+$ and intersection form $Q_{P_+}$, with matrix equal to the incidence matrix for $\Gamma_+$ with respect to the above bases. By the signature obstruction to sliceness, we need only consider odd pretzel knots $K$ for which $\sigma(K) = 0$. In order to utilize the positive-definite version of Donaldson's Diagonalization Theorem, we need to prove that $Q_X$ is positive definite for $X = P_+ \cup_Y W$. This is done with with the help of the following lemma:

\begin{lemma}
If $K$ is an odd $k$-stranded pretzel knot with $k$ odd and $\sigma(K) = 0$, then either $Q_{P_+}(K)$ or $Q_{P_+}(-K)$ is positive definite.
\label{lem}
\end{lemma}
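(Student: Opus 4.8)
The plan is to compute $Q_{P_+}$ explicitly from the graph $\Gamma_+$ and show that it diagonalizes over $\mathbb{R}$ to a form whose definiteness is governed entirely by $\operatorname{sgn}(\hat{e})$. Recall from Section \ref{graphbasics} that resolving each negative arm of weight $-b_j$ replaces it by a chain of $b_j - 1$ vertices all of weight $2$ and raises the weight of the central vertex by $1$; since there are $n$ negative arms, the central vertex $v_0$ of $\Gamma_+$ has weight $n$, the $p$ positive arms keep their weights $a_1, \dots, a_p$, and every other weight equals $2$. The first observation is therefore that all weights of $\Gamma_+$ are positive, so $Q_{P_+}$ has a good chance of being definite; the point is to isolate the single possibly-bad direction.

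The main step is a Schur-complement computation that eliminates all of the arm variables against $v_0$, exactly as in the diagonalization of $Q_{P_0}$ in the proof of Theorem \ref{thm:sthm}. Each positive arm is a $1\times 1$ block $[a_i]$, which is positive definite and whose elimination subtracts $1/a_i$ from the effective weight of $v_0$. Each weight-$2$ chain of length $b_j - 1$ is the tridiagonal matrix with $2$ on the diagonal and $1$ off it; this is positive definite (its leading principal minors are $2, 3, \dots, b_j$), and using the continued fraction $[2,2,\dots,2] = \tfrac{b_j}{b_j - 1}$ one checks that eliminating it subtracts $\tfrac{b_j - 1}{b_j} = 1 - \tfrac{1}{b_j}$ from the effective weight of $v_0$. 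Summing, the central vertex is left with effective weight
\[
n \;-\; \sum_{i=1}^{p}\frac{1}{a_i} \;-\; \sum_{j=1}^{n}\Bigl(1 - \frac{1}{b_j}\Bigr)
\;=\; \sum_{j=1}^{n}\frac{1}{b_j} - \sum_{i=1}^{p}\frac{1}{a_i}
\;=\; -\hat{e}.
\]
Hence $Q_{P_+}$ is congruent over $\mathbb{R}$ to a block-diagonal form consisting of the positive-definite arm blocks together with the single entry $-\hat{e}$, so $Q_{P_+}(K)$ is positive definite if and only if $\hat{e} < 0$.

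To finish I would invoke Theorem \ref{thm:sthm} together with the hypothesis $\sigma(K) = 0$. If $\hat{e} = 0$ then $\sigma(K) = s = p - n$, which is odd because $k = p + n$ is odd, hence nonzero, contradicting $\sigma(K) = 0$; so $\hat{e} \neq 0$. If $\hat{e} < 0$, the computation above shows $Q_{P_+}(K)$ is positive definite. If instead $\hat{e} > 0$, I pass to the mirror $-K = P(-a_1, \dots, -a_p, b_1, \dots, b_n)$, whose negative arms are now the $a_i$-arms and which satisfies $\hat{e}(-K) = -\hat{e}(K) < 0$; the same computation applied to $-K$ then shows $Q_{P_+}(-K)$ is positive definite. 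Either way one of $Q_{P_+}(K)$, $Q_{P_+}(-K)$ is positive definite.

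I expect the only delicate point to be the bookkeeping in the Schur-complement step: verifying that each weight-$2$ chain contributes exactly $1 - 1/b_j$ (which rests on the continued-fraction identity and on the chain meeting $v_0$ at a single end), and checking the degenerate case $b_j = 1$, where the chain is empty but the central weight still increases by $1$, so that $1 - 1/b_j = 0$ remains correct. The orientation and mirror bookkeeping in the last paragraph also deserves care, since passing to $-K$ interchanges the roles of $p$ and $n$ and hence changes which arms are resolved in forming $P_+$.
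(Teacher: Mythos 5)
Your proof is correct, and it takes a genuinely different route from the paper's. The paper never diagonalizes $Q_{P_+}$ directly: after mirroring so that $s = -1$ (hence $n = p+1$), it tracks the signature through the Kirby sequence $L_0 \rightarrow L_+$ of Figure \ref{kirbycalc} --- each blow-up or blow-down shifts $\sigma$ by $\pm 1$, handleslides do nothing --- to get $\sigma(Q_{P_+}) = \sigma(Q_{P_0}) + \sum_{i=1}^{n} b_i$, then uses the Kauffman--Taylor identification $\sigma(Q_{P_0}) = \sigma(K) = 0$ together with the rank count $\text{rk}(Q_{P_+}) = p + 1 + \sum_{i=1}^n (b_i - 1) = \sum_{i=1}^n b_i$ to conclude $|\sigma(Q_{P_+})| = \text{rk}(Q_{P_+})$. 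Your Schur-complement computation replaces that bookkeeping with pure linear algebra on the incidence matrix of $\Gamma_+$, and your arithmetic checks out: the weight-$2$ chain of length $b_j - 1$ has $(T^{-1})_{11} = (b_j - 1)/b_j$, so the central entry becomes $n - \sum 1/a_i - \sum\bigl(1 - 1/b_j\bigr) = -\hat{e}$, and the degenerate case $b_j = 1$ is handled consistently. Your route buys three things: it is self-contained (no appeal to Kauffman--Taylor and no signature-under-Kirby-moves argument); it proves the sharper statement that $Q_{P_+}(K)$ is positive definite if and only if $\hat{e}(K) < 0$, so one knows exactly which of $K$, $-K$ works; and it isolates the role of the hypothesis $\sigma(K) = 0$, which you use only to exclude $\hat{e} = 0$ --- a degeneracy one could alternatively rule out from $\det(K) \neq 0$, since Jabuka's determinant formula gives $\det(K) = \hat{e}\prod_i p_i$ for odd pretzel knots. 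What the paper's route buys in exchange is economy: it reuses machinery already set up (the explicit Kirby sequence and the Kauffman--Taylor theorem already invoked in the proof of Theorem \ref{thm:sthm}) and requires no matrix inversion or inertia argument. Both proofs rely on the same mirroring step to land in the favorable case.
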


\begin{proof} 
Let $K \in P\{a_1,...,a_p,-b_1,...,-b_n\}$ be an odd pretzel knot with $\sigma(K) = 0$. By Theorem \ref{thm:sthm}, we know $s = \text{sgn}(\hat{e})$ and we may assume $s = -1$ ($n = p + 1$), {\it after mirroring $K$ if necessary}. The plumbing manifold $P_+$ is represented by $L_+$ and $\Gamma_+$. By choosing the basis for $Q_{P_+}$ to again be the classes represented by the zero-sections of the disk bundles used to create $P_+$, the matrix representative of $Q_{P_+}$ is equal to the incidence matrix of $\Gamma_+$. Since $P_+$ differs from $P_0$ by the prescribed sequence of Kirby moves from Figure \ref{kirbycalc}, performed on each negatively-framed component of $P_0$, we analyze the effect of each move on the signature. 

Each Kirby move used to change $P_0$ into $P_+$ can be decomposed as a combination of the operations $\mathcal{O}_1$ and $\mathcal{O}_2$, which serve as the building blocks for Kirby calculus: Given a framed link $L$, operation $\mathcal{O}_1$ is the addition or subtraction of a single unknotted $S^1$ with framing $\pm 1$ separated from $L$ by an embedded $S^2$ in $S^3$; operation $\mathcal{O}_2$ is a handleslide, in which a component of $L$ can be ``added" to another component at the expense of changing framings on the components involved. The details can be found in \cite{Kirby}. 

Performing operation $\mathcal{O}_2$ on a framed link $L$ will not alter the smooth type of the underlying 4-manifold $P$, and therefore the operation $\mathcal{O}_2$ does not alter the signature $P$. But, operation $\mathcal{O}_1$ {\it does} change $P$ and this change will be reflected in a matrix representative of the intersection form of $P$ as the addition or subtraction of $(\pm 1)$ to $Q_P$ via direct sum. If $(1)$ is added or $(-1)$ is subtracted from $Q_P$, the signature will increase by one; conversely, if $(-1)$ is added or $(1)$ is subtracted from $Q_P$, then the signature will decrease by one. Of course, the handleslide operation $\mathcal{O}_2$ will alter the appearance of $Q_P$, but the change is purely superficial. 

Returning to the Kirby moves from $L_0$ to $L_+$, steps 1, 2, 3, and 5* involve the addition of a $+1$-framed unknot and step 6 involves removing a $-1$-framed unknot; hence, the signature will increase by 1 each time one of these moves performed. Step 4 involves the removal of a $+1$-framed unknot, and therefore decreases the signature by 1 each time it is performed. For each negatively-framed component of $L_0$, each step is performed exactly once with the exception of step 5*, which is repeated $b_i - 3$ times in a sequence performed on a $-b_i$-framed component of $L_0$. Summing over all $n$ negatively-framed components of $L_0$, this implies:
$$
\sigma(Q_{P_+}) = \sigma(Q_{P_0}) + \sum_{i = 1}^n 3 + (b_i - 3) = \sigma(Q_{P_0})  + \sum_{i = 1}^n b_i
$$
\indent Recall that Kauffman's result in \cite{Kauff} implies that $\sigma(Q_{P_0}) = \sigma(K) = 0$. Hence, $\sigma(Q_{P_+}) = \sum_{i = 1}^n b_i$. Based on $L_+$, we can also compute that $\text{rk}(Q_{P_+}) = p + 1 + \sum_{i = 1}^n (b_i - 1) = p + 1 - n + \sum_{i = 1}^n b_i = \sum_{i = 1}^n b_i$, since $p + 1 = n$. Thus, 
$$
|\sigma(Q_{P_+})| = \sum_{i = 1}^n b_i = \text{rk}(Q_{P_+}),
$$
and therefore $Q_{P_+}$ is positive definite.
\end{proof}

With our eye on applying the Diagonalization Theorem to $X$ and the help of Lemma \ref{lem}, we argue that for $X = P_+ \cup_Y W$, $Q_X$ is also positive definite. Consider the following portion of the Mayer-Vietoris sequence for $X$ with rational coefficients:

\begin{center}
\begin{tikzpicture}
\matrix (m) [matrix of math nodes, row sep=3em, column sep=2.5em, text height=1.5ex, text depth=0.25ex]
{0 & \mathbb{Q}^n \oplus 0 & H_2(X; \mathbb{Q}) & 0\\};
\path[->]
(m-1-1) edge (m-1-2)
(m-1-2) edge node[auto] {$i_*$} (m-1-3)
(m-1-3) edge (m-1-4);
\end{tikzpicture}
\end{center}

\noindent The map $i_*$ is an isomorphism which implies that every element $x \in H_2(X)$ is a $\mathbb{Q}$-linear combination of basis elements $\{s_i\}$ for $H_2(P_+)$ and torsion elements of $H_2(W)$. Bilinearity of $Q_X$ and positive-definiteness of $Q_{P_+}$ yield that $Q_X$ is positive definite. Thus, we are free to utilize the previously described construction using Donaldson's Diagonalization Theorem, with $P = P_+$, to obtain the embedding criterion for sliceness on odd, 5-stranded pretzel knots. 



In all the results that follow, we use Theorem \ref{thm:sthm} to immediately reduce to considering only those odd, 5-stranded pretzel knots of the form $P(-a, -b, -c, d, e)$ for which sgn$(\hat{e}) = -1$. We use $P(-a, -b, -c, d, e)$ rather than its mirror in order to use the positive definite formulation of Donaldson's Theorem. As stated in the explanation of the Lattice Embedding Condition, we wish to write down a matrix $A$ satisfying $A^TA = Q_{P_+}$. This condition can be phrased as a collection of conditions on the column vectors of $A$: 

\noindent {\bf Embedding Conditions.} For a slice, odd, 5-stranded pretzel knot $K = P(-a, -b, -c, d, e)$, there exist vectors $v_i, v_{j,r} \in \mathbb{Z}^m$, with $m = a + b + c$, satisfying:
\begin{enumerate}
	\item [(0)] $v_0 \cdot v_0 = 3$
	\item $v_1 \cdot v_0 = 1$
	\item $v_2 \cdot v_0 = 1$
	\item $v_1 \cdot v_2 = 0$
	\item $v_1 \cdot v_1 = d$
	\item $v_2 \cdot v_2 = e$
	\item $v_{j,1} \cdot v_0 = 1$
	\item $v_{j,r} \cdot v_{j,r} = 2$
	\item For $r \geq 2$: $v_{j,r} \cdot v_{j,r \pm 1} = 1$
	\item For $r \geq 2$: $v_{j,r} \cdot v_* = 0$, for all vectors $v_* \neq v_{j, r \pm 1}$
\end{enumerate}

The Embedding Conditions impose severe restrictions on the form each $v_i$ and $v_{j,r_j}$ can take. Condition (0) for example, implies that $v_0$ must have exactly three entries equal to $\pm 1$ and zeros otherwise; similarly, condition (7) implies that each vector $v_{j,r_j}$ must have exactly two entries equal to $\pm 1$ and zeros otherwise. It can be verified using conditions $(0)$ - $(7)$ that up to a change of basis, $A$ will have the following form with $\alpha, \beta, \gamma, x,y,z \in \mathbb{Z}$:
$$
\newcommand*{\temp}{\multicolumn{1}{r|}{}}
\left[\begin{array}{ccccccccccccccccccccc } 
v_0 & v_1 & v_ 2 & & v_{a,1} & v_{a,2} & & v_{a,a-1} & & v_{b,1} & v_{b,2} & & v_{b,b-1} & & v_{c,1} & v_{c,2} & & v_{c,c-1}\\
\hline
1 & \alpha & x & \temp & 1 & 0 & \cdots & 0 & \temp & & & & & \temp  & & & & \\
0 & \alpha & x & \temp & -1 & -1 & & 0 & \temp & & & & & \temp & & & & \\
0 & \alpha & x & \temp  & 0 & 1 & & 0 & \temp & & & & & \temp & & & & \\
0 & \alpha & x  & \temp & 0 & 0 & \cdots & 0 & \temp & & & 0 & & \temp & & & 0 & \\
 & \vdots &  & \temp & \vdots & & \ddots & \vdots & \temp & & & & & \temp & & & & \\
0 & \alpha & x  & \temp & 0 & 0 & \cdots & -1 & \temp & & & & & \temp & & & & \\
0 & \alpha & x  & \temp & 0 & 0 & \cdots & 1 & \temp & & & & & \temp & & & & \\
\hline 
1 & \beta & y & \temp & & & & & \temp & 1 & 0 & \cdots & 0 & \temp & & & & \\
0 & \beta & y & \temp & & & & & \temp & -1 & -1 & & 0 & \temp & & & & \\
0 & \beta & y & \temp & & & & & \temp & 0 & 1 & & 0 & \temp & & & & \\
0 & \beta & y & \temp & & & 0 & & \temp & 0 & 0 & \cdots & 0 & \temp & & & 0 & \\ 
 & \vdots & & \temp & & & & & \temp & \vdots & & \ddots & \vdots & \temp & & & & \\
0 & \beta & y & \temp & & & & & \temp & 0 & 0 & \cdots & -1 & \temp & & & & \\ 
0 & \beta & y & \temp & & & & & \temp & 0 & 0 & \cdots & 1 & \temp & & & & \\
\hline
1 & \gamma & z & \temp & & & & & \temp & & & & & \temp & 1 & 0 & \cdots & 0 \\
0 & \gamma & z & \temp & & & & & \temp & & & & & \temp & -1 & -1 & & 0 \\
0 & \gamma & z & \temp & & & & & \temp & & & & & \temp & 0 & 1 & & 0 \\
0 & \gamma & z & \temp & & & 0 & & \temp & & & 0 & & \temp  & 0 & 0 & \cdots & 0\\
 & \vdots & & \temp & & & & & \temp & &  & & & \temp & \vdots & & \ddots & \vdots \\
0 & \gamma & z & \temp & & & & & \temp & & & & & \temp  & 0 & 0 & \cdots & -1 \\
0 & \gamma & z & \temp & & & & & \temp & & & & & \temp & 0 & 0 & \cdots & 1\\
\end{array} \right]
$$

Having $A$ in this explicit form allows us to put restrictions on the unordered 5-tuples $\{a,b,c,d,e\}$ that will satisfy the Embedding Conditions. For fixed $a$, $b$, and $c$, we enumerate the Embedding Conditions in terms of the entries of the column vectors of $A$, which reduces the problem of finding the desired embedding to the problem of finding integers $\alpha, \beta, \gamma, x, y, z$ that satisfy the following system of non-linear equations. Each new condition is numbered to correspond to the original embedding condition that implies it. When referring to the Embedding Conditions by number, no distinction is made between the original conditions and the updated conditions since the updated conditions are direct implications of the originals. 

\noindent {\bf (Updated) Embedding Conditions.} For a slice, odd, 5-stranded pretzel knot $K = P(-a, -b, -c, d, e)$, there exist integers $\alpha, \beta, \gamma, x, y, z$ satisfying: 
\begin{enumerate}
	\item $\alpha + \beta + \gamma = 1$
	\item $x + y + z = 1$
	\item $a \alpha x + b \beta y + c \gamma z = 0$
	\item $a \alpha ^2 + b \beta ^2 + c \gamma ^2 = d$
	\item $a x^2 + b y^2 + c z^2 = e$
\end{enumerate}
\noindent In fact, these updated Embedding Conditions are exactly the contents of Theorem 4.1.6 in \cite{long}, so a more detailed account of these facts can be found there.\footnote{Warning: Long's approach to the problem of sliceness in 5-stranded pretzel knots uses a negative definite convention rather than the positive definite convention of this paper.} 

\vspace{.05in}

\section{The \texorpdfstring{$d$}-Invariants and the Coset Counting Conditions} \label{dinvt}
Peter Ozsv{\'a}th and Zolt{\'a}n Szab{\'o} defined the $d$-invariant $d(Y,\mathfrak{s}) \in \mathbb{Q}$ in the setting of Heegaard-Floer homology for a rational homology 3-sphere $Y$ equipped with a Spin$^c$ structure $\mathfrak{s}$. While the $d$-invariant has an important function as a correction term for the grading in Heegaard-Floer homology, it is highly relevant to 4-manifold topology because it is a Spin$^c$ rational homology bordism invariant. As stated in \cite{AFH}, if $(Y_1, \mathfrak{s}_1)$ and $(Y_1, \mathfrak{s}_1)$ are two pairs such that $Y_i$ is a rational homology 3-sphere and $\mathfrak{s}_i$ is a Spin$^c$ structure on $Y_i$, then if there exists a connected, oriented, smooth cobordism $W$ from $Y_1$ to $Y_2$ with $H_i(W; \mathbb{Q}) = 0$ for $i = 1,2$, which can be endowed with a Spin$^c$ structure $\mathfrak{t}$ whose restriction to $Y_i$ is $\mathfrak{s}_i$, then $d(Y_1, \mathfrak{s}_1) = d(Y_2, \mathfrak{s}_2)$. The proof of this highly nontrivial fact is given in Proposition 9.9 of \cite{AFH}, and it has the following corollary:

\begin{corollary}
\label{dzero}
(Ozsv{\'a}th-Szab{\'o}) Let $Y$ be a rational homology 3-sphere with Spin$^c$ structure $\mathfrak{s}$, and let $W$ be a rational homology 4-ball with $\partial W =Y$ and Spin$^c$ structure $\mathfrak{t}$. If $\mathfrak{s}$ can be extended over $W$ so that $\mathfrak{t}|_Y = \mathfrak{s}$, then $d(Y, \mathfrak{s}) = 0$. 
\end{corollary}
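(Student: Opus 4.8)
The plan is to reduce the statement to the foundational Heegaard-Floer computation $d(S^3, \mathfrak{s}_0) = 0$ by producing a Spin$^c$ rational homology cobordism from $(Y, \mathfrak{s})$ to $(S^3, \mathfrak{s}_0)$, and then invoking the bordism invariance of the $d$-invariant (Proposition 9.9 of \cite{AFH}, quoted just above).

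First I would puncture $W$: remove an open $4$-ball from its interior to form $W' = W \setminus \mathrm{int}(B^4)$. Orienting boundaries in the usual way, $\partial W' = Y \sqcup (-S^3)$, so $W'$ is a cobordism from $Y$ to $S^3$. Next I would verify that $W'$ satisfies the rational-homology hypothesis of Proposition 9.9, namely $H_i(W'; \mathbb{Q}) = 0$ for $i = 1, 2$. This follows from the long exact sequence of the pair $(W, W')$ together with excision, which identifies $H_*(W, W'; \mathbb{Q})$ with $H_*(B^4, S^3; \mathbb{Q})$; the latter is concentrated in degree $4$, so in degrees $1$ and $2$ the inclusion-induced maps $H_i(W'; \mathbb{Q}) \to H_i(W; \mathbb{Q})$ are isomorphisms. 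Since $W$ is a rational homology $4$-ball, $H_1(W;\mathbb{Q}) = H_2(W;\mathbb{Q}) = 0$, and hence the same vanishing holds for $W'$.

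Then I would equip $W'$ with the restriction of the given Spin$^c$ structure $\mathfrak{t}$. By hypothesis this restricts to $\mathfrak{s}$ on the $Y$ end. On the $S^3$ end there is nothing to arrange: since $H^2(S^3) = 0$, the manifold $S^3$ carries a unique Spin$^c$ structure $\mathfrak{s}_0$, so $\mathfrak{t}|_{S^3} = \mathfrak{s}_0$ automatically. Applying Proposition 9.9 to $W'$ with these compatible structures yields $d(Y, \mathfrak{s}) = d(S^3, \mathfrak{s}_0)$. Finally, the standard computation $d(S^3, \mathfrak{s}_0) = 0$ completes the argument.

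The steps are all routine once the cobordism is in place; the only point that takes any care is the homological bookkeeping in the second step, and in particular making sure the \emph{punctured} manifold $W'$ still meets the vanishing hypotheses of Proposition 9.9 rather than $W$ itself. A secondary point worth stating explicitly is the uniqueness of the Spin$^c$ structure on $S^3$, which is what lets us match $\mathfrak{t}|_{S^3}$ with $\mathfrak{s}_0$ without any choice; the orientation on the $S^3$ end is immaterial here, since $d(S^3, \mathfrak{s}_0) = 0 = -d(S^3, \mathfrak{s}_0)$.
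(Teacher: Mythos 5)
Your proof is correct and follows exactly the route the paper intends: the paper states this result as an immediate corollary of the bordism invariance in Proposition 9.9 of \cite{AFH}, and your puncturing argument (with the homological bookkeeping for $W' = W \setminus \mathrm{int}(B^4)$ and the uniqueness of the Spin$^c$ structure on $S^3$) is precisely the standard deduction the paper leaves implicit.
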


In general $d(Y,\mathfrak{s})$ may be hard to compute, but in \cite{OS} Ozsv{\'a}th and Szab{\'o} give a formula for $d(Y,\mathfrak{s})$ when $Y$ is the boundary of a 4-dimensional plumbing manifold $P$. Their formula holds in more generality than the version presented below, but the formula is stated here in the special case relevant to the present situation. Throughout this section, we refer to $K$, $Y$, $P$, $W$, and $X$ as defined in Section \ref{donald}. To remind the reader of these definitions: $K$ is assumed to be a slice, odd pretzel knot; $Y$ is the double branched cover of $S^3$ along $K$; $W$ is the double branched cover of $B^4$ along a fixed slice disk for $K$ with $\partial W = Y$; $P = P_+$ is a positive definite plumbing manifold with $\partial P = Y$; and $X = P \cup_Y W$ is a closed positive definite manifold. Furthermore, $W$ is a rational homology 4-ball and $Y$ is a rational homology 3-sphere under these assumptions. To state the formula easily and give a more geometric flavor to the material that follows, we first discuss an identification of Spin$^c(Y)$ with $H_1(Y)$.

If $Y$ is a 3-manifold such that $H_1(Y)$ is odd torsion, then there is a natural identification of Spin$^c(Y)$ with $H_1(Y)$. In our current work $Y$ is the double branched cover of $S^3$ along a knot $K$, and a bit of straightforward algebraic topology reveals that $H_1(Y)$ is always odd torsion in this case. The first step in the identification shows a one-to-one correspondence between Spin$^c(Y)$ and vect($Y$), the set of Euler structures on $Y$. An Euler structure on a smooth, closed, connected, oriented 3-manifold $Y$ is an equivalence class of nonsingular tangent vector fields on $Y$, where two vector fields $u$ and $v$ on $Y$ are deemed equivalent if $u$ and $v$ are homotopic as nonsingular vector fields outside of some closed, 3-dimensional ball. This particular identification of Spin$^c(Y)$ with vect($Y$) is due to Vladimir Turaev and constitutes Lemma 1.4 in \cite{turaev}, so the reader is directed there for details. The salient feature of this step is that it allows us to view a Spin$^c$ structure on $Y$ as a vector field over $Y$ under some notion of equivalence. 

Assuming Turaev's identificaiton of Spin$^c(Y)$ with vect($Y$), the second step is to identify vect($Y$) with $H_1(Y)$. If the first Chern class of $Y$ is given by $c_1 ( \cdot) \in H^2(Y)$, then the map sending $v \in \text{vect}(Y)$ to one-half the Poincar{\'e} dual of $c_1(v)$ gives a one-to-one correspondence between vect($Y$) and $H_1(Y)$. Thus, for $\mathfrak{s} \in$ Spin$^c(Y)$ with corresponding $e_{\mathfrak{s}} \in$ vect($Y$), $\mathfrak{s}$ is identified with $\text{PD}(\frac{1}{2}c_1(e_{\mathfrak{s}})) \in H_1(Y)$. 

A second topic necessary to discuss before stating the $d$-invariant formula is that of characteristic elements of $H_2(X)/\tor$, $H_2(P)$, and $H_2(P,Y)$. These definitions involve intersection numbers, and in all cases we will abbreviate the intersection number of two elements $a, b$ in $H_2(X)/\tor$, $H_2(P)$, or $H_2(P,Y)$ by $a \cdot b$ and let the definition of $a \cdot b$ be given by context. As before, $Q_X$ and $Q_P$ are the intersection forms on $X$ and $P$, respectively. We define:
\begin{itemize}
 	\item If $a, b \in H_2(X)/\tor$, then $a \cdot b = Q_X(a,b)$. 
	\item If $a, b \in H_2(P)$, then $a \cdot b = Q_P(a,b)$. 
	\item If $a \in H_2(P,Y)$ and $b \in H_2(P)$, then $a \cdot b = Q_P(x,b)$, where $x = Q_P^{-1}(a) \in H_2(P)$.
	\item If $a,b \in H_2(P,Y)$, then $a \cdot b = Q_P(x,y)$, where $x$ is as above and $y = Q_P^{-1}(b) \in H_2(P)$. 
\end{itemize}

By choosing bases for $H_2(X)/\tor$, $H_2(P)$, and $H_2(P,Y)$, homology classes in these groups can be represented by column vectors and the intersection forms $Q_X$ and $Q_P$ can be represented by matrices. We choose bases as follows: the basis $\{e_i\}$ for $H_2(X)/\tor$ is the one that makes $Q_X$ diagonal by Donaldson's Theorem; the basis $\{s_i\}$ for $H_2(P)$ is the set of homology classes represented by the zero-sections of the disk bundles used to create $P$; lastly, the basis $\{d_i\}$ for $H_2(P,Y)$ is the set of classes represented by single fiber disks in each of the disk bundles of $P$. Note that the fiber disks $\{d_i\}$ are the Hom-duals of the $\{s_i\}$. 

With fixed bases the above intersection numbers can be computed using column vector representatives for homology classes and the matrix representatives for $Q_X$ and $Q_P$. As matrices with the above bases, recall that $Q_P$ is equal to the incidence matrix of the weighted graph representing $P$ and $Q_X$ is equal to the identity matrix of rank $b_2(X)$. By an abuse of notation, we use $Q_P$ to denote both the intersection form for $P$ and its matrix representative in this case. This allows us to write and compute the above intersection numbers in terms of column vectors $a, b$ as follows:  
\begin{itemize}
 	\item If $a, b \in H_2(X)/\tor$, then $a \cdot b = a^Tb$. 
	\item If $a, b \in H_2(P)$, then $a \cdot b = a^TQ_Pb$. 
	\item If $a \in H_2(P,Y)$ and $b \in H_2(P)$, then $a \cdot b = x^TQ_Pb$, where $x = Q_P^{-1}(a) \in H_2(P)$. This simplifies to $a \cdot b = a^T b$.
	\item If $a,b \in H_2(P,Y)$, then $a \cdot b = Q_P(x,y)$, where $x$ is as before and $y = Q_P^{-1}(b) \in H_2(P)$. This simplifies to $a \cdot b = a^T Q_P^{-1} b$.
\end{itemize}

\def\char{\textup{Char}}

Now, we say that an absolute class $w \in H_2(X)/\tor$ is a {\it characteristic class of $X$} if $w \cdot x \equiv x \cdot x$ (mod 2), for all $x \in H_2(X)/\tor$; we say a characteristic class $w$ is {\it minimal} if $w \cdot w \leq z \cdot z$ for all characteristic classes $z$. Characteristic and minimal characteristic elements of $H_2(P)$ are defined similarly. A relative class $w \in H_2(P,Y)$ is {\it characteristic in $X$ with respect to $\mathfrak{s}$}, where $\mathfrak{s}$ is regarded as an element of $H_1(Y)$, if $\partial w = \mathfrak{s}$ and $w \cdot u \equiv u \cdot u$ (mod 2), for all $u \in H_2(P)$. The set of characteristic elements in $H_2(P,Y)$ relative to $\mathfrak{s}$ is denoted by $\char_{\mathfrak{s}}(P)$, which makes an appearance in the formula below. 

We are now ready to state Ozsv{\'a}th and Szab{\'o}'s formula for $d(Y,\mathfrak{s})$ in the case that $Y$ bounds a certain type of 4-dimensional plumbing:

\begin{theorem} [Ozsv{\'a}th and Szab{\'o}]
\label{dFormula}
Let $P$ be a 4-dimensional plumbing with positive definite intersection form $Q_P$, such that the weighted graph of $P$ has at most two vertices whose weights are less than their valences. Then under the identification Spin$^c(Y) \rightarrow H_1(Y)$:
\begin{equation} \label{eq:eq0}
d(Y,\mathfrak{s}) = \underset{w \in \char_{\mathfrak{s}}(P)}{\min} \frac{w \cdot w - \sigma(P)}{4}.
\end{equation}
\end{theorem}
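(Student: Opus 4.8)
The plan is to deduce this from Ozsv\'ath and Szab\'o's combinatorial computation of $HF^+$ for plumbed three-manifolds in \cite{OS}, after rephrasing everything so that the orientation and sign conventions match. Since $d(Y,\mathfrak{s})$ is by definition the minimal $\mathbb{Q}$-grading of an element in the image of $HF^\infty(Y,\mathfrak{s}) \to HF^+(Y,\mathfrak{s})$ --- equivalently, the grading at which the $U$-tower $\mathcal{T}^+$ sits inside $HF^+(Y,\mathfrak{s})$ --- it suffices to compute $HF^+(Y,\mathfrak{s})$ together with its absolute grading and then read off the bottom of this tower. I would first pass to the negative-definite picture by reversing orientation, replacing $P$ by $-P$ and using $d(-Y,\mathfrak{s}) = -d(Y,\mathfrak{s})$; under this change the positive-definite minimum over $\char_{\mathfrak{s}}(P)$ becomes the maximum of $(w\cdot w + |G|)/4$ that appears in \cite{OS}, where $|G|$ is the number of vertices of the plumbing graph.

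The heart of the argument is the graded isomorphism $HF^+(-Y) \cong \mathbb{H}^+(G)$ of \cite{OS}, valid precisely under a hypothesis limiting the number of \emph{bad} vertices --- those whose weight is smaller than their valence, which in our positive-definite normalization are exactly the vertices singled out in the statement. Here $\mathbb{H}^+(G)$ is an explicit combinatorial complex whose generators are the characteristic covectors of $Q_P$, and on which the absolute grading of a characteristic vector $w$ is given by $(w \cdot w - \sigma(P))/4$. Granting this identification, each Spin$^c$ structure $\mathfrak{s}$ (viewed in $H_1(Y)$ via the Turaev/Chern-class identification set up above) selects the coset $\char_{\mathfrak{s}}(P)$, the $U$-tower of $HF^+(Y,\mathfrak{s})$ corresponds to the generator of lowest grading in that coset, and the formula \eqref{eq:eq0} falls out as the minimum of $(w \cdot w - \sigma(P))/4$ over $w \in \char_{\mathfrak{s}}(P)$.

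One inequality is essentially free and worth isolating as a sanity check: the correction-term inequality for negative-definite fillings gives, for every Spin$^c$ extension $\mathfrak{t}$ over $P$ of $\mathfrak{s}$, a bound of the form $c_1(\mathfrak{t})^2 + b_2(P) \le 4\,d(Y,\mathfrak{s})$, and translating $c_1(\mathfrak{t})$ into the characteristic vector $w$ yields one direction of \eqref{eq:eq0} with no plumbing hypothesis at all. The genuine content, and the step I expect to be the main obstacle, is \emph{sharpness}: proving that the combinatorial complex $\mathbb{H}^+(G)$ actually computes $HF^+$, which is carried out in \cite{OS} by an induction that repeatedly modifies vertices and controls the resulting surgery exact triangles in $HF^+$. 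It is exactly in keeping these exact sequences under control that the restriction on the number of bad vertices is used; once too many vertices have weight below their valence the combinatorial model can fail to capture $HF^+$, so the hypothesis cannot simply be removed. For the star-shaped graphs $\Gamma_+$ arising here the only candidate bad vertex is the central one, so the hypothesis is comfortably satisfied and the formula applies.
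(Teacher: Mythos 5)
Your proposal targets the right source: the paper itself offers no proof of this theorem --- it is quoted from Ozsv\'ath and Szab\'o \cite{OS}, translated into the positive-definite convention --- so what must be checked is whether your reconstruction of their argument is sound. Most of it is. The orientation-reversal bookkeeping (turning the minimum of $(w \cdot w - \sigma(P))/4$ into a maximum of $(w \cdot w + |G|)/4$ over characteristic covectors of the negative-definite graph), the characterization of $d$ as the bottom of the $U$-tower, and the observation that one inequality comes for free from the correction-term bound for definite fillings, with no hypothesis on the plumbing, are all correct; and sharpness is indeed where the combinatorial model $\mathbb{H}^+(G)$ and the surgery-triangle induction enter.

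The gap is in the sentence asserting that the graded isomorphism $HF^+(-Y) \cong \mathbb{H}^+(G)$ is ``valid precisely under'' the hypothesis of the statement. The statement, following Ozsv\'ath--Szab\'o, allows \emph{two} vertices whose weights are less than their valences, but the isomorphism you lean on (Theorem 1.2 of \cite{OS}) is established there only for \emph{at most one} bad vertex; with two bad vertices the combinatorial model is not known to compute $HF^+$ (in particular $HF^+_{\mathrm{odd}}(-Y(G))$ need not vanish), and Ozsv\'ath and Szab\'o obtain the two-bad-vertex case of the correction-term formula (their Corollary 1.5) by an additional argument layered on top of the isomorphism, not as an immediate consequence of it. So your proof as written establishes only the one-bad-vertex case of the theorem. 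That weaker case happens to suffice for everything done in this paper --- as you note, the star-shaped graph $\Gamma_+$ has the central node as its only candidate bad vertex --- but as a proof of the theorem as stated, the two-bad-vertex case is missing and cannot be reached by the isomorphism alone.
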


In \cite{greene-jabuka}, Greene and Jabuka use Theorem \ref{dFormula} and Corollary \ref{dzero} to give an obstruction to sliceness for odd pretzel knots through some analysis of the cohomology long exact sequences of the pairs $(P,Y)$ and $(X,W)$. Here, we derive their results in terms of homology and obtain the following commutative diagram. In the diagram the horizontal maps arise from the long exact sequences of the pairs $(P,Y)$ and $(X,W)$; the vertical maps $r$ and $\gamma$ are induced by inclusions;  $\beta$ is an isomorphism due to excision; and $q$ is the usual quotient map. 

\begin{center}
\begin{tikzpicture}
\matrix (m) [matrix of math nodes, row sep=3em, column sep=2.5em, text height=1.5ex, text depth=0.25ex]
{0&H_2(P)&H_2(P,Y)&H_1(Y)&0&0\\
\cdots&H_2(X)&H_2(X,W)&H_1(W)&H_1(X)&0\\
\cdots&H_2(X)/\tor& & & & \\};
\path[->]
(m-1-1) edge (m-1-2)
(m-1-2) edge node[auto] {$\lambda$} (m-1-3)
(m-1-3) edge node[auto]  {$\partial$} (m-1-4)
(m-1-4) edge (m-1-5)
(m-1-5) edge (m-1-6)
(m-1-2) edge node[auto] {$r$} (m-2-2)
(m-1-3) edge node[auto] {$\cong$} (m-2-3)
(m-1-3) edge node[left] {$\beta$} (m-2-3)
(m-1-4) edge node[auto] {$\gamma$} (m-2-4)
(m-1-5) edge (m-2-5)
(m-2-1) edge (m-2-2)
(m-2-2) edge node[auto] {$g$} (m-2-3)
(m-2-2) edge node[auto] {$q$} (m-3-2)
(m-2-3) edge (m-2-4)
(m-2-4) edge (m-2-5)
(m-2-5) edge (m-2-6)
(m-3-1) edge (m-3-2)
(m-3-2) edge node[auto] {$\mu$} (m-2-3);

\end{tikzpicture}
\end{center}

Because $H_2(P)$ is free and $r$ is a homomorphism, the image of $r$ lies entirely in the free part of $H_2(X)$. After letting $\alpha = qr$ and $\alpha^* = \beta^{-1}\mu$, this allows us to use the first isomorphism theorem to eliminate $H_2(X)$ from the diagram. By commutativity, $\lambda$ can be seen to have the factorization $\lambda = \alpha^*\alpha$, converting the previous diagram into:

\begin{center}
\begin{tikzpicture}
\matrix (m) [matrix of math nodes, row sep=3em, column sep=2.5em, text height=1.5ex, text depth=0.25ex]
{0&H_2(P)&H_2(P,Y)&H_1(Y)&0&0\\
\cdots&H_2(X)/\tor&H_2(X,W)&H_1(W)&H_1(X)&0\\};
\path[->]
(m-1-1) edge (m-1-2)
(m-1-2) edge node[auto] {$\lambda$} (m-1-3)
(m-1-3) edge node[auto]  {$\partial$} (m-1-4)
(m-1-4) edge (m-1-5)
(m-1-5) edge (m-1-6)
(m-1-2) edge node[left] {$\alpha$} (m-2-2)
(m-1-3) edge node[auto] {$\cong$} (m-2-3)
(m-1-3) edge node[left] {$\beta$} (m-2-3)
(m-1-4) edge node[auto] {$\gamma$} (m-2-4)
(m-1-5) edge (m-2-5)
(m-2-1) edge (m-2-2)
(m-2-2) edge node[auto] {$\alpha^*$} (m-1-3)
(m-2-2) edge node[auto] {$\mu$} (m-2-3)
(m-2-3) edge node[auto] {$\nu$} (m-2-4)
(m-2-4) edge (m-2-5)
(m-2-5) edge (m-2-6);
\end{tikzpicture}
\end{center}

To use this diagram in conjunction with the Lattice Embedding Condition, it is advantageous to work with matrix representatives of the maps $\alpha$, $\alpha^*$, and $\lambda$. We choose the bases for $H_2(P)$, $H_2(P,Y)$, and $H_2(X)/\text{\tor}$ as before, we let $A$ be the matrix representative for the map $\alpha$ (induced by the embedding of $P$ into $X$), and we let $A^*$ be the matrix for $\alpha^*$. 

The columns of $A$ express the basis elements $\{e_i\}$ of $H_2(X)/\tor$ in terms of the basis disks $\{d_i\}$ for $H_2(P,Y)$. Consequently, the {\it rows} of $A^T$ express the spheres $\{s_i\}$ in terms of the $\{e_i\}$, which implies that the $ij^{th}$ entry in $A^TA$ gives the intersection number between the spheres $s_i$ and $s_j$. Thus $A^TA$ is the matrix of $Q_P$ with respect to the basis $\{s_i\}$. 

Recall that each basis element $d_i$ of $H_2(P,Y)$ is the Hom-dual of the basis element $s_i$ of $H_2(P)$, therefore $\lambda(s_i) = \sum_j(s_i \cdot s_j)d_j$. This implies that with respect to the chosen bases, $\lambda$ (as a linear map from $H_2(P)$ to $H_2(P,Y)$) is represented by the same matrix as is $Q_P$ (regarded as a bilinear map from $H_2(P) \times H_2(P)$ to $\mathbb{Z}$). Namely, $\lambda$ is also represented by $A^TA$. Given that $\lambda = \alpha^*\alpha$, it follows that $A^*A = A^TA$ as matrices. Since $Q_P$ is invertible over $\mathbb Q$, so is $A$; whence $A^*=A^T$. By continuing the abuse in notation whereby we use $Q_P$ to denote both the intersection form on $P$ and its matrix representative in this case, we let $Q_P$ as a matrix represent $\lambda$ with respect to the chosen bases. 

Dropping the less relevant maps, the previous commutative diagram becomes:

\begin{center}
\begin{tikzpicture}
\matrix (m) [matrix of math nodes, row sep=3em, column sep=2.5em, text height=1.5ex, text depth=0.25ex]
{0&H_2(P)&H_2(P,Y)&H_1(Y)&0&0\\
\cdots&H_2(X)/\tor&H_2(X,W)&H_1(W)&H_1(X)&0\\};
\path[->]
(m-1-1) edge (m-1-2)
(m-1-2) edge node[auto] {$Q_P$} (m-1-3)
(m-1-3) edge (m-1-4)
(m-1-4) edge (m-1-5)
(m-1-5) edge (m-1-6)
(m-1-2) edge node[left] {$A$} (m-2-2)
(m-1-3) edge node[auto] {$\cong$} (m-2-3)
(m-1-3) edge (m-2-3)
(m-1-4) edge node[auto] {$\gamma$} (m-2-4)
(m-1-5) edge (m-2-5)
(m-2-1) edge (m-2-2)
(m-2-2) edge node[auto] {$A^T$} (m-1-3)
(m-2-2) edge (m-2-3)
(m-2-3) edge (m-2-4)
(m-2-4) edge (m-2-5)
(m-2-5) edge (m-2-6);
\end{tikzpicture}
\end{center}

\noindent We use this to restate and reprove in a homological setting Greene and Jabuka's $d$-invariant obstruction to sliceness in odd pretzel knots:

\begin{theorem}[Greene-Jabuka]
\label{cosets}
Let $K$ be a slice, odd pretzel knot with $Y$, $W$, $P = P_+$, and $X$ as in the above commutative diagram. Then every coset of coker($\alpha$) contains a minimal characteristic class of $H_2(X)/\tor$. 
\end{theorem}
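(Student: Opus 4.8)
The plan is to convert the vanishing of the relevant $d$-invariants, which is forced by sliceness, into the existence of minimal characteristic vectors in $H_2(X)/\tor$, transporting data between $X$ and the plumbing $P$ across the isometry $\alpha^* = A^T$. First I would record the ambient facts assembled above. By Donaldson's Theorem and Lemma \ref{lem} we may take $Q_X = \mathrm{Id}$, so that, writing $r = b_2(X) = b_2(P)$, the characteristic classes of $H_2(X)/\tor \cong \mathbb{Z}^r$ are exactly the all-odd integer vectors, the minimal ones being the $2^r$ vectors of $\{\pm 1\}^r$, each of square $r$; here $r = \sigma(X) = \sigma(P)$ since $\sigma(W) = b_2(W) = 0$. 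From the final commutative diagram, $\alpha$ is represented by $A$, $\alpha^*$ by $A^T$, and $\lambda$ by $Q_P = A^TA$. Finally, $\det A$ is odd, since $(\det A)^2 = \det Q_P = |H_1(Y)| = \det K$ and the determinant of a knot is odd.

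Next I would identify the cosets of $\mathrm{coker}(\alpha)$. Since $\ker\partial = \mathrm{im}(\lambda) = Q_P\mathbb{Z}^r \subseteq A^T\mathbb{Z}^r = \mathrm{im}(\alpha^*)$ and $A^T$ is injective, one checks $\ker(\partial\alpha^*) = \mathrm{im}(\alpha)$, so $\partial\alpha^*$ descends to an isomorphism $\mathrm{coker}(\alpha) \cong G$, where $G := \mathrm{im}(\partial\alpha^*) \subseteq H_1(Y)$. Commutativity of the diagram identifies $G$ with $\ker(\gamma\colon H_1(Y)\to H_1(W))$, a subgroup of order $\sqrt{|H_1(Y)|}$; as $H_1(Y)$ is odd torsion, this is precisely the set of $\mathrm{Spin}^c$ structures on $Y$ that extend over $W$. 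Hence each coset of $\mathrm{coker}(\alpha)$ corresponds to a unique $\mathfrak{s} \in \mathrm{Spin}^c(Y)$ extending over $W$, and Corollary \ref{dzero} gives $d(Y,\mathfrak{s}) = 0$ for every such $\mathfrak{s}$.

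Fixing a coset and the associated $\mathfrak{s} \in G$, I would feed $d(Y,\mathfrak{s}) = 0$ into the Ozsv\'ath--Szab\'o formula (Theorem \ref{dFormula}) to produce a relative class $w_0 \in \char_{\mathfrak{s}}(P)$ with $w_0\cdot w_0 = \sigma(P) = r$. Because $\partial w_0 = \mathfrak{s} \in G = \partial(\mathrm{im}\,\alpha^*)$ and $\ker\partial \subseteq \mathrm{im}(\alpha^*)$, the class $w_0$ lies in $\mathrm{im}(\alpha^*)$; thus $w_0 = \alpha^*(c') = A^T c'$ for a unique $c' \in H_2(X)/\tor$, and $c'$ lies in the chosen coset since $\partial\alpha^*(c') = \mathfrak{s}$. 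A direct computation from $A Q_P^{-1} A^T = \mathrm{Id}$ shows $\alpha^*$ preserves the pairing, so $c'\cdot c' = w_0\cdot w_0 = r$.

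The remaining, and most delicate, point is to promote $c'$ to a genuine characteristic class of $X$, rather than one that is merely characteristic against $\mathrm{im}(\alpha)$. Unwinding the condition $w_0 \in \char_{\mathfrak{s}}(P)$ yields $(c')^{T} v \equiv v^{T} v \pmod 2$ only for $v \in \mathrm{im}(\alpha) = A\mathbb{Z}^r$. Reducing mod $2$ and using $x^{T}x \equiv \sum_i x_i \pmod 2$, the map $x \mapsto (c')^{T}x + x^{T}x$ is an $\mathbb{F}_2$-linear functional vanishing on $A\mathbb{Z}^r$; since $\det A$ is odd, $A$ is invertible over $\mathbb{F}_2$, so $A\mathbb{Z}^r$ spans $\mathbb{F}_2^r$ and the functional vanishes identically. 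Thus $c'$ has all odd entries, i.e.\ it is characteristic in $X$. An all-odd vector in $\mathbb{Z}^r$ of square $r$ must have every entry equal to $\pm 1$, so $c'$ is a minimal characteristic class lying in the prescribed coset, as required. I expect this parity step --- the oddness of $\det K = |H_1(Y)|$ --- to be the crux of the argument: it is exactly what upgrades relative characteristic-ness over $P$ to honest characteristic-ness over $X$, and without it the minimizer $w_0$ would not be guaranteed to pull back to a $\pm 1$ vector.
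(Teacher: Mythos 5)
Your proposal is correct and follows essentially the same route as the paper's proof: vanishing $d$-invariants via Corollary \ref{dzero}, the Ozsv\'ath--Szab\'o formula to produce $w_0 \in \textup{Char}_{\mathfrak{s}}(P)$ with $w_0\cdot w_0 = \sigma(P)$, pulling back through $\alpha^* = A^T$ to a class of square $m$ in the given coset, and a mod-2 argument using invertibility of $A$ over $\mathbb{F}_2$ to force all-odd (hence $\pm 1$) entries. Your treatment actually supplies two details the paper leaves implicit --- the explicit diagram chase identifying $\mathrm{coker}(\alpha)\cong\ker(\gamma)$ via $\partial\alpha^*$, and the justification that $\det A$ is odd because $(\det A)^2 = |H_1(Y)| = \det K$ --- but the argument is the same in substance.
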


\begin{proof}
Under the assumption that $K$ is slice, it follows that $K$ satisfies the Embedding Conditions and $\sigma(P) = \text{rk}(Q_P) = \text{rk}(Q_X) = b_2(X) := m$. It also follows from Corollary 3 that $d(Y, \mathfrak{s}) = 0$ for every $\mathfrak{s}$ that extends over $W$. In general, the Spin$^c$ structures on a rational homology 3-sphere $Y$ that extend over a rational homology 4-ball $W$ are identified with precisely those elements in $H_1(Y)$ that bound relative homology classes in $H_2(W,Y)$. As such, they are in one-to-one correspondence with the elements of $\text{ker}(\gamma)$, where $\gamma:H_1(Y) \rightarrow H_1(W)$ is induced by inclusion. 

We may apply Theorem \ref{dFormula} to $Y$ since the plumbing graph of $P$ will have exactly one vertex whose weight is less than its valence, namely, the central node. Formula (\ref{eq:eq0}) implies that $d(Y,\mathfrak{s}) = 0$ for all $\mathfrak{s} \in \text{ker}(\gamma)$ if and only if there exists $w \in \char_{\mathfrak{s}}(P)$ such that $w \cdot w = m$. A straight-forward diagram chase shows that for each $w \in \char_{\mathfrak{s}}(P)$ there exists an element $x \in H_2(X)/\tor$ such that $\alpha^*(x) = w$. We will use this to show that the characteristic classes of $P$ relative to $\mathfrak{s}$ correspond to absolute characteristic classes of $X$ with equal intersection number. This will allow us to compute $w \cdot w$, which appears in formula (\ref{eq:eq0}), by using $x \cdot x$ instead. 

Fix the bases for $H_2(P)$, $H_2(P,Y)$, and $H_2(X)/\tor$ as before, and again let $A = (a_{ij})$ be the matrix representative of $\alpha$ with respect to these bases. Let $w \in \char_{\mathfrak{s}}(P)$ and $x = (x_1, ..., x_m) \in H_2(X)/\tor$ such that $\alpha^*(x) = w$. Recall that $\alpha^*$ is represented by the matrix $A^T$ with respect to these bases. To show that $x$ is characteristic in $X$, it suffices to show that $x \cdot e_j \equiv e_j \cdot e_j$ (mod 2), for all basis elements $e_j$. Since $e_j \cdot e_j = 1$, we need only show that $x \cdot e_j$ -- that is, the $j^{th}$ component of $x$ -- is odd for all $j$. Stated differently, we must show that every component of $x$ is odd. 

By definition of $\char_{\mathfrak{s}}(P)$, $w \cdot u \equiv u \cdot u$ (mod 2) for all $u \in H_2(P)$ and in particular, this holds when $u$ is equal to a basis element $s_j$ for $H_2(P)$: $w \cdot s_j \equiv s_j \cdot s_j$ (mod 2). Observe that for all $j$: 
$$
w \cdot s_j = A^Tx \cdot s_j = x^TAs_j = \sum_i x_i a_{ij},
$$ 
and
$$
s_j \cdot s_j = (Q_P)_{jj} = (A^TA)_{jj}  = \sum_i a_{ij}a_{ij} \equiv \sum_i a_{ij} \;\; \text{ (mod 2)}.
$$
Hence, $\sum_i x_i a_{ij} \equiv \sum_i a_{ij}$ (mod 2) for all $j$. Letting $x_i \equiv 1$ (mod 2) yields a solution to this equation, and in fact is the unique solution since $A$ (mod 2) is invertible. Given that this holds for all $j$, we have shown that $x$ has all odd entries and is therefore characteristic in $X$. Furthermore, since $w \in H_2(P,Y)$, from above we know $w \cdot w = w^TQ_P^{-1}w$. Making the substitutions $Q_P = A^TA$ and $w = A^Tx$ shows that $w \cdot w = x \cdot x$. 

In addition, the diagram chase from before shows that $\text{ker}(\gamma) \cong \text{coker}(\alpha)$. Combining this with the preceding information implies that $d(Y,\mathfrak{s}) = 0$ for all $\mathfrak{s} \in \text{ker}(\gamma)$ with corresponding $k \in \text{coker}(\alpha)$ if and only if there exists $w \in \char_{\mathfrak{s}}(P)$ and $x \in \char(X)$ such that $w = A^Tx$, $x \cdot x = m$, and $x + \text{Im}(\alpha) = k$. Clearly, $x \cdot x = m$ only if $x_i = \pm 1$ for all $i$, which implies that $x$ is a {\it minimal} characteristic class of $X$. Hence, $K$ slice implies that every element of $\text{coker}(\alpha)$, i.e. every coset of Im$(\alpha)$, contains a minimal characteristic class of $X$.
\end{proof}

Theorem \ref{cosets} gives a necessary condition for sliceness for odd, 5-stranded pretzel knots, which can be rephrased in a simpler, more geometric way by analyzing the quotient coker($\alpha$)$= (H_2(X)/\text{\tor}) / \text{Im}(\alpha)$. We will reduce the problem of finding minimal characteristic vectors in each coset of coker$(\alpha)$ to a more visualizable problem of finding lattice points in $\mathbb{Z}^2$ with certain properties.

Since $H_2(X)/\text{\tor} \cong \mathbb{Z}^m$, coker($\alpha$) $\cong \mathbb{Z}^m/\text{Im}(\alpha)$. Given that the image of $\alpha$ with our chosen bases is equal to the span of the columns of $A$, coker($\alpha$) is isomorphic to the quotient of $\mathbb{Z}^m$ by the columns of $A$. Let $\mathcal{U} = \{v_{j,r_j}\}$ be the set of column vectors of $A$ with standard dot product 2, where $1 \leq j \leq n$ and $1 \leq r_j \leq j-1$. Then the columns of $A$, as vectors, are given by $\{v_0, v_1, v_2, \; \mathcal{U} \}$.

Define $B: \mathbb{Z}^m \rightarrow \mathbb{Z}^2$ by:
$$
(x_1,...,x_a,y_1,...,y_b,z_1,...,z_c)^T \mapsto \left( \displaystyle \sum_{i=1}^{a} x_i - \displaystyle \sum_{k=1}^{c} z_k, \displaystyle \sum_{j=1}^{b} y_j - \displaystyle \sum_{k=1}^{c} z_k \right)^T.
$$
It is easy to see that ker($B$) = $\langle v_0, \mathcal{U} \rangle$ and that $B$ is onto, so by the first isomorphism theorem $\mathbb{Z}^2 \cong \mathbb{Z}^m/ \langle v_0, \mathcal{U} \rangle$. It follows that:
$$
\text{coker}(\alpha) \;\;\; \cong \;\;\; \mathbb{Z}^m \Bigm/ \langle v_0, v_1, v_2,\;  \mathcal{U} \rangle 
			\;\;\; \cong \;\;\; \mathbb{Z}^2 \Bigm/ \langle \text{B}(v_1), \text{B}(v_2) \rangle.
$$

Let $\widetilde{v_1} := \text{B}(v_1)$ and $\widetilde{v_2} := \text{B}(v_2)$. Using the above isomorphisms, the slice condition in Theorem 5 can now be rephrased to say that every coset in $\mathbb{Z}^2 \Bigm/ \langle \widetilde{v_1}, \widetilde{v_2} \rangle$ must have a representative in B$(\{ \pm 1 \}^n)$. Thus, we analyze $\mathbb{Z}^2 \Bigm/ \langle \tilde{v_1}, \tilde{v_2} \rangle$ and B$(\{ \pm 1 \}^n)$:
$$
\widetilde{v_1} = \text{B}((\alpha, ... , \alpha, \beta, ... , \beta,\gamma, ..., \gamma))^T = (a\alpha - c\gamma, b\beta - c\gamma)^T,
$$
$$
\widetilde{v_2} = \text{B}((x,...,x,y,...,y,z,...,z))^T = (ax - cz, by - cz)^T,
$$
and
$$	
\text{B}(\{ \pm 1 \}^m) = (q-s, r-s),
$$

\noindent where $-a-c \leq q-s \leq a+c$ and $-b-c \leq r-s \leq b+c$.
 
Observe that $\{ \widetilde{v_1}, \widetilde{v_2} \}$ defines a fundamental domain $\mathcal{R} \subset \mathbb{Z}^2$ and B$(\{ \pm 1 \}^n)$ defines a hexagonal region $\mathcal{H} \subset 2\mathbb{Z}^2$, where each point $(x,y) \in \mathcal{H}$ satisfies $-a-c \leq x \leq a+c$ and $-b-c \leq y \leq b+c$. By the above argument, the slice condition that every element of coker($\alpha$) contain a minimal characteristic vector of the form $(\{\pm 1\}^n)$ is equivalent to the condition that every lattice point in $\mathcal{R}$ be able to be translated onto a lattice point in $\mathcal{H}$ by a linear combination of $\widetilde{v_1}$ and $\widetilde{v_2}$. Since $\mathcal{R}$ is a fundamental domain, every lattice point in $\mathcal{R}$ represents a distinct coset in the quotient $\mathbb{Z}^2 \Bigm/ \langle \widetilde{v_1}, \widetilde{v_2} \rangle$. This proves the following:

\noindent {\bf Coset Condition I}: If $P(a,b,c,d,e)$ is a slice, odd, 5-stranded pretzel knot, then $|\mathcal{R}| \leq |\mathcal{H}|$.

It is possible, however, for many points in $\mathcal{H}$ to belong to the same coset in $\mathbb{Z}^2 \Bigm/ \langle \widetilde{v_1}, \widetilde{v_2} \rangle$. Theorem 5 is clearly contradicted if Coset Condition I is not satisfied, but Theorem 5 is also contradicted if $|\mathcal{R}| > |\bar{\mathcal{H}}|$, where $|\bar{\mathcal{H}}|$ is the number of $\mathcal{R}$-cosets in $\mathcal{H}$. This observation is a refinement of Coset Condition I, which the author unimaginatively deems Coset Condition II:

\noindent {\bf Coset Condition II}: If $P(a,b,c,d,e)$ is slice, then $|\mathcal{R}| \leq |\bar{\mathcal{H}}|$.

\vspace{.05in}

\section{Proof of Theorem \ref{thm2}} \label{sec:thm2}
Due to the slightly different nature of pretzel knots with single-twists versus those without, the proof of Theorem \ref{thm2} is divided according to this distinction. We first give two technical lemmas and then show that all 0-pair, odd, 5-stranded pretzel knots {\it without} single-twists are not slice. Then, we give a refinement of one of the lemmas and with this show that all 0-pair, odd, 5-stranded pretzel knots {\it with} single-twists are not slice.

Recall that a knot $K$ is slice if and only if its mirror $-K$ is slice. To achieve a higher degree of computational ease in the proof, the knot $K = P(-a,-b,-c,d,e) \in P\{a,b,c,-d,-e\}$ will be used rather than its mirror $P(a,b,c,-d,-e)$. Lemmas \ref{lem1} and \ref{lem2}, given next, make clear the conditions under which $P(-a,-b,-c,d,e)$ will be a 0-pair pretzel knot. Without loss of generality, we will assume $a \leq b \leq c$ throughout.

\begin{lemma}
\label{lem1}
Let $K \in P\{a,b,c,-d,-e\}$ satisfy the Embedding Conditions. If any two of $\{\alpha, \beta, \gamma\}$ is zero or if any two of $\{x,y,z\}$ is zero, then $K$ is 1- or 2-pair.
\end{lemma}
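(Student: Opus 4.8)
The plan is to exploit the two sum conditions (1) and (2) of the Updated Embedding Conditions together with the ``diagonal'' conditions (4) and (5). The organizing observation is that conditions (1)--(5) are symmetric under any permutation of the three index-columns, i.e. under simultaneously permuting the triples $(a,\alpha,x)$, $(b,\beta,y)$, $(c,\gamma,z)$. Stating this symmetry once at the outset reduces each of the two scenarios to a single representative sub-case, so that the three possible placements of the two vanishing variables need not be written out redundantly.

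First I would treat the case where two of $\{\alpha,\beta,\gamma\}$ vanish; after relabeling via the above symmetry, assume $\beta=\gamma=0$. Condition (1), namely $\alpha+\beta+\gamma=1$, then forces $\alpha=1$ (in particular this rules out all three vanishing simultaneously, which would contradict (1)). Substituting $\alpha=1$, $\beta=\gamma=0$ into condition (4), $a\alpha^2+b\beta^2+c\gamma^2=d$, collapses it to $a=d$. Hence the twist parameters $-a$ and $d$ of $K=P(-a,-b,-c,d,e)$ form a canceling pair $\{-a,a\}$, so $K$ is not $0$-pair; that is, $K$ is $1$- or $2$-pair, as claimed.

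The second scenario, in which two of $\{x,y,z\}$ vanish, is handled in exactly the same way with $(x,y,z)$ in place of $(\alpha,\beta,\gamma)$ and conditions (2) and (5) in place of (1) and (4): assuming $y=z=0$ gives $x=1$ from (2), and then (5), $ax^2+by^2+cz^2=e$, yields $a=e$, producing the canceling pair $\{-a,e\}=\{-a,a\}$ and again forcing $K$ to be $1$- or $2$-pair.

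I expect essentially no analytic obstacle here; the delicate points are purely bookkeeping. I must confirm that ``two of the three are zero'' genuinely forces the remaining variable to equal exactly $1$ (which the sum condition supplies, and which simultaneously excludes the all-zero case), and I must fix at the start the precise meaning of ``$1$- or $2$-pair''---namely the existence of at least one canceling pair $p_i=-p_j$ among the twist parameters---so that exhibiting the single equality $a=d$ (or $a=e$) is logically sufficient to conclude.
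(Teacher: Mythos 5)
Your proof is correct and follows essentially the same route as the paper's: invoke the permutation symmetry of the Embedding Conditions to reduce to one placement of the two vanishing variables, use the sum condition (1) (resp.\ (2)) to force the remaining variable to equal $1$, and then read off a canceling pair from the diagonal condition (4) (resp.\ (5)). The only cosmetic differences are your choice of which two variables vanish and your explicit treatment of the $\{x,y,z\}$ scenario, which the paper dispatches by the same symmetry appeal.
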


\begin{proof}
Choose $K = P(-a, -b, -c, d, e)$. By the symmetry of the embedding conditions on $\{\alpha, \beta, \gamma\}$ and $\{x, y, z\}$, it suffices to prove the result for $\{\alpha, \beta, \gamma\}$. Suppose two of $\{\alpha, \beta, \gamma\}$ are zero. Without loss of generality, let $\alpha = \beta = 0$. By Embedding Condition (1), it must be that $\gamma = 1$ and thus $d = c$ by Embedding Condition (4). Consequently, $P(-a,-b,-c,d,e) = P(-a,-b,-c,c,e)$, so $K$ has at least one pair of canceling twist parameters. Hence $K$ is either 1- or 2-pair.
\end{proof}

\begin{lemma}
\label{lem2}
If $K \in P\{a,b,c,-d,-e\}$ is 0-pair and satisfies the Embedding Conditions, then at most one of $\{\alpha, \beta, \gamma, x, y, z\}$ is zero and, without loss of generality, either $d \geq 4a + b$ and $e \geq a + b + c$, or $d,e \geq a + b + c$. 
\end{lemma}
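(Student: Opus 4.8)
The plan is to establish the two assertions of the lemma in turn: first that at most one of the six integers $\{\alpha, \beta, \gamma, x, y, z\}$ can vanish, and then the lower bounds on $d$ and $e$, which will fall out of Embedding Conditions (4) and (5) once the structure of the zeros is understood.

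For the first assertion I would argue by contradiction, supposing that two or more of the six are zero. If either triple $\{\alpha,\beta,\gamma\}$ or $\{x,y,z\}$ contains two zeros, then Lemma \ref{lem1} immediately forces $K$ to be $1$- or $2$-pair, contradicting the $0$-pair hypothesis; so I may assume each triple contains exactly one zero. I would then split according to whether the two zeros occupy the same block or different blocks, deriving a contradiction from Embedding Condition (3), $a\alpha x + b\beta y + c\gamma z = 0$, in each case. In a different-block configuration, say $\alpha = 0$ and $y = 0$, Condition (3) collapses to a single monomial such as $c\gamma z = 0$, which forces a second zero into one of the triples and again contradicts Lemma \ref{lem1}.

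The genuinely delicate case — the step I expect to be the main obstacle — is the same-block configuration, say $\alpha = x = 0$, since there the parameter $a$ drops out of Conditions (3)–(5) entirely and no canceling pair is forced in any obvious way. Here I would use a sign argument: because $\beta,\gamma$ are nonzero integers with $\beta + \gamma = 1$ by Condition (1), they must have opposite signs, and likewise $y,z$ by Condition (2). Consequently $(\beta y)(\gamma z) = (\beta\gamma)(yz) > 0$, so $b\beta y$ and $c\gamma z$ share a sign (as $b,c > 0$) and cannot sum to zero, contradicting Condition (3). This rules out the last configuration and completes the proof that at most one of the six entries vanishes.

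For the lower bounds I would read off $d = a\alpha^2 + b\beta^2 + c\gamma^2$ and $e = ax^2 + by^2 + cz^2$ from Conditions (4) and (5). Since at most one entry is zero, at most one of the two triples contains a zero. A triple with all entries nonzero contributes at least $a+b+c$, since each square is at least $1$; a triple containing a zero has its two remaining entries nonzero and summing to $1$, so one has absolute value at least $2$, and minimizing the weighted sum of squares over the choice of vanishing entry while using $a \le b \le c$ yields the bound $4a+b$ in every case (the minimum $4a+b$ arising when the missing coefficient is $c$ and the value $2$ sits on the smallest coefficient $a$). Relabeling $d \leftrightarrow e$ if necessary — legitimate since the Embedding Conditions are symmetric in the pairs $(d,(\alpha,\beta,\gamma))$ and $(e,(x,y,z))$ — I may place the zero-containing triple, if there is one, on the side of $d$. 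This gives precisely the stated dichotomy: either $d \ge 4a+b$ with $e \ge a+b+c$, or else both $d,e \ge a+b+c$.
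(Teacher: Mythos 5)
Your proof is correct, and on the key step it takes a genuinely different route from the paper's. Both arguments share the same skeleton: Lemma \ref{lem1} caps each triple at one zero, the case of two zeros in different positions (e.g.\ $\alpha = 0$, $y = 0$) is dispatched by collapsing Condition (3) to a single monomial that forces a second zero into one triple, and the bounds on $d$ and $e$ are read off from Conditions (4) and (5) exactly as you do, with the same minimization yielding $4a+b$. The divergence is in the same-position case $\alpha = x = 0$: the paper solves Conditions (2) and (3) for $y$, obtaining $y = c\gamma/(c\gamma - b\beta)$, and then argues in two sign cases ($\gamma \geq 2$ and $\gamma \leq -1$) that this expression lies strictly between $0$ and $1$, contradicting $y \in \mathbb{Z}$; you instead note that $\beta, \gamma$ are nonzero integers summing to $1$ and hence of opposite signs, likewise $y, z$, so that $b\beta y$ and $c\gamma z$ are nonzero and of the same sign, and Condition (3) fails outright. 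Your sign argument buys brevity and robustness: it needs no division, no integrality check, and no case split, and it is visibly independent of the sizes of $a, b, c$ --- relevant because the paper later reuses this lemma for knots with single twists, where twist parameters can equal $1$. What the paper's computation buys in exchange is the explicit rational expression for $y$ and sharper intermediate bounds in each subcase (e.g.\ $d \geq 4b + c$ when $\alpha = 0$), though only the uniform bound $4a+b$ is ever used downstream.
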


\begin{proof}
Choose $K = P(-a, -b, -c, d, e)$. Lemma \ref{lem1} implies that at most one of $\{\alpha, \beta, \gamma\}$ is zero and at most one of $\{x, y, z\}$ is zero. If none of $\{\alpha, \beta, \gamma, x, y, z\}$ is zero, then Embedding Conditions (4) and (5) imply that $d = a\alpha^2 + b\beta^2 + c\gamma^2 \geq a + b + c$ and $e = ax^2 + by^2 +cz^2 \geq a + b + c$, since $c \geq b \geq a \geq 1$.

We will show that if any of $\{x, y, z\}$ is zero, then either $K$ is not 0-pair or there is a contradiction to $x, y, z \in \mathbb{Z}$. Suppose $\alpha = 0$ and $\beta \neq \gamma \neq 0$. Embedding Conditions (1) and (4) immediately yield $d = b \beta^2 + c \gamma^2 > 4b + c > 4a + b$, while Embedding Condition (3) implies:

\begin{equation}  
 b \beta y = -c \gamma z
\end{equation}

\noindent Note that if either $y = 0$ or $z = 0$, then the other equals 0 as well. This would imply that $P(-a,-b,-c,d,e)$ is not 0-pair by Lemma \ref{lem1}, a contradiction. Thus, both $y$ and $z$ are nonzero. If $x = 0$, Embedding Condition (2) implies that $z = 1-y$. Substituting this into Equation (2) and solving for $y$, we get:

\begin{equation}\label{yform}
y = \dfrac{c \gamma}{c \gamma - b \beta}.
\end{equation}

Since $\alpha = 0$, Embedding Condition (1) implies that $\beta = 1 - \gamma$. If $\gamma = 1$, then $\beta = 0$ as well and by Lemma \ref{lem1}, $P(-a,-b,-c,d,e)$ is not 0-pair. The same follows if $\beta = 1$. Hence, we can assume $\gamma \geq 2$ or $\gamma \leq -1$. Note that $\beta$ and $\gamma$ always have different signs. 

If $\gamma \geq 2$, then $\beta \leq -1$ and thus $-b \beta > 0$. Thus, Equation (3) takes on the form:

\begin{equation}
y = \dfrac{p}{p + q}.
\end{equation}

\noindent where $p,q \in \mathbb{Z}$ and $p,q \geq 3$. Consequently $y$ cannot be an integer, which contradicts the Embedding Conditions. 

If instead $\gamma \leq -1$, then $\beta \geq 2$. In this case, one can take Equation (2) and solve for $z$ instead of $y$, yielding:

\begin{equation}
z = \dfrac{b \beta}{b \beta - c \gamma}
\end{equation}

\noindent By the same argument given for $\gamma \geq 2$, since $\beta \geq 2$ it follows that $z$ cannot be an integer and the Embedding Conditions are again contradicted. Thus if $\alpha = 0$, all of $\{x,y,z\}$ must be nonzero and $e = ax^2 + by^2 +cz^2 \geq a + b + c$ by Embedding Condition (5). 

If $\beta = 0$, the proof follows similarly with $d = a\alpha^2 + c\gamma^2 \geq 4a + c \geq 4a + b$; if $\gamma = 0$, then again the proof follows similarly with $d = a\alpha^2 + b\beta^2 \geq 4a + b$. In all three cases, $e = ax^2 + by^2 +cz^2 \geq a + b + c$.

If none of $\{\alpha, \beta, \gamma, x, y, z\}$ is zero, then by Embedding Conditions (4) and (5) we get $d = a\alpha^2 + b\beta^2 + c\gamma^2 \geq a + b + c$ and $e = ax^2 + by^2 +cz^2 \geq a + b + c$. 
\end{proof}

The proof of Theorem \ref{thm2} will now proceed by contradicting Coset Counting Condition I. First suppose $K$ is a slice, odd, 5-stranded pretzel knot without single-twists, i.e. $3 \leq a \leq b \leq c$. It follows that $\sigma(K) = 0$, so $K \in P\{a, b, c, -d, -e\}$ by Theorem \ref{thm:sthm} and we may assume $K = P(-a, -b, -c, d, e)$. Since $K$ is slice, $K$ also satisfies the Lattice Embedding Condition and Coset Counting Condition I; the fact that $K$ is 0-pair implies that $d \geq 4a + b$ and $e \geq a + b + c$ by Lemma \ref{lem2}. 

With this, we compute $|\mathcal{R}|$ and $|\mathcal{H}|$, where $\mathcal{R}$ and $\mathcal{H}$ are as in Coset Counting Condition I. Given that $\text{ker}(\gamma) \cong \text{coker}(\alpha)$ and $|\text{ker}(\gamma)| = \sqrt{|H_1(Y)|} = \sqrt{|\text{det} (K)|}$, we know $|\mathcal{R}| = \sqrt{|\text{det}(K)|}$. Theorem 1.4 in \cite{Jabuka} gives the following formula for the determinant of odd pretzel knots $P(p_1, ..., p_k)$: 
$$
\text{det}(K) = \sum_{i = 1}^{k} p_1 \cdots p_{i-1}\hat{p_i}p_{i+1} \cdots p_k \;.
$$
Using this with our choice of $K$, we get:
$$
\text{det}(K) = -abcd - abce + abde + acde +bcde \;.
$$
For $|\mathcal{H}|$, recall that $\mathcal{H} = \{(q - s, r - s) | -a \leq q \leq a, -b \leq r \leq b, -c \leq s \leq c\}$. A straightforward computation yields:
$$
|\mathcal{H}| = ab + ac + bc + a + b + c + 1 \;.
$$ 

We will violate Coset Condition I by arguing that $|\mathcal{R}|^2 > |\mathcal{H}|^2$, using the facts that:
\begin{center}
\begin{enumerate}
	\item $3 \leq a \leq b \leq c$,
	\item $d \geq 4a + b$ and $e \geq a + b + c$ or $d,e \geq a + b + c$, and
	\item $ab > a + b + 1/2$ for $a, b \geq 3$.
\end{enumerate} 
\end{center}
\noindent The case in (2) where $d, e \geq a + b + c$ is the content of Theorem 2.0.3 in \cite{long}, thus it is not proven here. Hence, we assume $d \geq 4a + b$ and $e \geq a + b + c$. In comparing $|\mathcal{R}|^2$ and $|\mathcal{H}|^2$, it suffices to show that a lower bound for $|\mathcal{R}|^2$ is greater than $|\mathcal{H}|^2$, after canceling common terms from both. 

First consider $|\mathcal{H}|^2$:
$$
\begin{aligned}
	|\mathcal{H}|^2 & = (ab + ac + bc + a + b + c + 1)^2 \\
		& = L + M + N + S
\end{aligned}
$$
where: 
$$
\begin{aligned}
	L & = a^2b^2 + a^2c^2 + b^2c^2 + 2(a^2bc + ab^2c + abc^2) \\
	M & = 2(a^2b + a^2c + ab^2 +b^2c) \\
	N & = 2c^2(a + b + 1/2) + 6abc + 4(ab + ac) + 3bc \\
	S & = a^2 + b^2 + bc + 2(a + b + c) + 1
\end{aligned}
$$
Note that $L$ consists of all the quartic terms of $|\mathcal{H}|^2$. Next consider $|\mathcal{R}|^2$:
$$
\begin{aligned}
	|\mathcal{R}|^2 & = |\text{det}(K)| \\
		& = |-abcd - abce + abde + acde +bcde| \\
		& = abd(e-c) + bce(d-a) + acde \\
		& > 5a^2b^2 +4a^2c^2 + b^2c^2 + 8a^2bc + 5ab^2c + 4abc^2 + 4a^3(b+c) + b^3(a+c) \\
		& =: E_3.
\end{aligned}
$$

At this point, it follows that $|\mathcal{R}|^2 - L > E_3 - L$, and $|\mathcal{R}|^2 > |\mathcal{H}|^2$ if $E_3 - L > |\mathcal{H}|^2 - L$. By noting that $E_3 - L$ is a strictly increasing, purely quartic function of $a, b, c$ and $|\mathcal{H}|^2 - L$ is a strictly increasing cubic function in the same three variables, the reader may find it believable that $E_3 - L > |\mathcal{H}|^2 - L$ for all values $a, b, c \geq 3$ provided this inequality holds true for the minimal choice of $a = b = c = 3$. As it happens, when $a = b = c = 3$, $E_3 - L = 2298$ and $ |\mathcal{H}|^2 - L = 640$. For the more suspicious reader, we continue with the proof and observe:
$$
\begin{aligned}
	E_3 - L & = 4a(a^2b + a^2c) + b(ab^2 + b^2c) + 4a^2b^2 + 3a^2c^2 + 6a^2bc + 3ab^2c + 2abc^2 \\
		& > 12(a^2b + a^2c) + 3(ab^2 + b^2c) + 4a^2b^2 + 3a^2c^2 + 6a^2bc + 3ab^2c + 2abc^2 \\
		& =: E_2.
\end{aligned} 
$$
Furthermore:
$$
\begin{aligned}
	E_2 - M & = 10(a^2b + a^2c) + (ab^2 + b^2c) + 4a^2b^2 + 3a^2c^2 + 6a^2bc + 3ab^2c + 2abc^2 \\
		& > 2c^2(a + b + 1/2) + 6abc + 4(ab + ac) + 3bc  + 4a^2b^2 + 3a^2c^2 \\
		& \;\;\;\;\; + 3ab^2c + 6(a^2b + a^c) + ab^2 \\
		& =: E_1.
\end{aligned}
$$
Lastly:
$$
\begin{aligned}
	E_1 - N & = 4a^2b^2 + 3a^2c^2 + 3ab^2c + 6(a^2b + a^2c) + ab^2 \\
		& = 6a^2b + ab^2 + 3a^2c^2 + (4a^2b^2 + 3ab^2c + 6a^2c) \\
		& > a^2 + b^2 + bc + 2(a + b + c) + 1 \\
		& = S \\
		& = |\mathcal{H}|^2 - L - M - N
\end{aligned}
$$
This shows that:
$$
 \begin{aligned}
 	|\mathcal{R}|^2 - L - M - N & > E_3 - L - M - N \\
						& > E_2 - M -N \\
						& > E_1 - N \\
						& >  |\mathcal{H}|^2 - L - M - N
\end{aligned}
$$
which implies that $|\mathcal{R}|^2 > |\mathcal{H}|^2$, as desired. This completes the proof that all 0-pair, odd, 5-stranded pretzel knots without single-twists are not slice.

Next, we address the knots with single-twists. As before, we assume all knots in question are slice and therefore satisfy the signature condition, the Lattice Embedding condition, and both Coset Counting Conditions. Just a little bit of thought reveals that possibly after mirroring, the signature condition yields only three cases to consider for 0-pair, odd, 5-stranded pretzel knots with single-twists. For $K \in P\{-a,-b,-c, d,e\}$, the cases are:

\begin{enumerate}
	\item $a = b = c =1$ and $d,e \geq 3$.
	\item $a = b = 1$ and $c,d,e \geq 3$.
	\item $a = 1$ and $b,c,d,e \geq 3$.
\end{enumerate}

\noindent Since the Lattice Embedding Conditions hold, there exist $\alpha$, $\beta$, $\gamma$, $x$, $y$, $z \in \mathbb{Z}$ satisfying the system of equations given in Section \ref{donald}. Thus, we have the same starting point for 0-pair pretzel knots with single-twists as for 0-pair knots without single-twists. Lemma \ref{lem1} and Lemma \ref{lem2} still apply here for all three cases of 0-pair pretzel knots with single-twists. To obstruct sliceness for 0-pair knots $P(-a,-b,-c,d,e)$ with single twists, however, it is necessary to get more precise lower bounds on $d$ and $e$ than are obtained in Lemma \ref{lem2}. 

\begin{lemma}
\label{lem3}
If $K \in P\{-a,-b,-c,d,e\}$ is 0-pair and $d$ is equal to its lower bound (either $d = 4a + b$ or $d = a + b + c$), then $e \geq 4a + 4b + c$. 
\end{lemma}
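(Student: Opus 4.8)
The plan is to use the hypothesis that $d$ attains its lower bound to pin down $(\alpha,\beta,\gamma)$, feed the result into Embedding Condition~(3), and then read off the bound on $e$ from the resulting linear Diophantine constraint together with the oddness of $a,b,c$. By Lemma~\ref{lem2} at most one of $\alpha,\beta,\gamma$ vanishes, so I would split according to whether a zero is present. When a zero is present, the ordering $a\le b\le c$ and the computation of the lower bound in Lemma~\ref{lem2} force (up to the boundary coincidences $a=b$ or $b=c$) the configuration $\gamma=0$, $\alpha=2$, $\beta=-1$, so that $d=4a+b$. When no zero is present, Embedding Condition~(4) together with $\alpha+\beta+\gamma=1$ forces $\{\alpha,\beta,\gamma\}$ to be a sign arrangement of $(1,1,-1)$ realizing $d=a+b+c$; any other pattern makes at least two of the squares $\ge 4$ and hence $d>a+b+c$, so it cannot meet the hypothesis that $d$ is minimal.

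For the case $d=a+b+c$, I would substitute the sign arrangement of $(1,1,-1)$ into Embedding Condition~(3) and eliminate $z$ using $z=1-x-y$ from Embedding Condition~(2). In each of the three sign sub-cases this yields a single equation of the shape $(\,\text{even}\,)x+(\,\text{even}\,)y=c$: the two coefficients are sums or differences of the odd numbers $a,b,c$ and are therefore even, while the right-hand side $c$ is odd. Hence there are no integer solutions $(x,y)$ whatsoever, so this case is vacuous and the conclusion holds trivially. This parity dichotomy is the clean crux of the argument and is exactly where the oddness hypothesis does its work.

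The substantive case is $d=4a+b$, where Embedding Condition~(3) collapses to $2ax=by$. Writing $g=\gcd(a,b)$, $a=ga'$, $b=gb'$ with $a',b'$ odd and coprime, the oddness of $b'$ gives $\gcd(2a',b')=1$, so $2a'x=b'y$ has integer solutions $(x,y)=t(b',2a')$ with $t\ne 0$ (the value $t=0$ is excluded, since $x=y=0$ would make $K$ fail to be $0$-pair by Lemma~\ref{lem1}). Embedding Condition~(2) then forces $z=1-t(b'+2a')$, and Embedding Condition~(5) becomes
\[
e \;=\; g\,a'b'\,t^2\,(b'+4a') \;+\; c\,\bigl(1-t(b'+2a')\bigr)^2 .
\]
For $|t|=1$ one has $b'+2a'\ge 3$, so $z^2\ge 4$, and a direct estimate using $a'b'\ge 1$ and $c\ge b$ gives $e\ge 4a+4b+c$; for $|t|\ge 2$ the first term already dominates and the bound is easier.

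I expect the main obstacle to be careful bookkeeping rather than a single hard idea. One must confirm that the configurations singled out above are genuinely the only ones meeting the hypothesis that $d$ equals its lower bound, dispatching the boundary coincidences $a=b$ and $b=c$ (which produce variant but equally tractable minimizers such as $\alpha=-1,\beta=2$) and the overlap $c=3a$, where $4a+b=a+b+c$ (harmless, since the $(1,1,-1)$ configuration occurring there is already vacuous by the parity step). The genuinely delicate point is verifying $e\ge 4a+4b+c$ in the tight case $a=b$, $|t|=1$: there the quadratic term contributes only $5a$ while the target is $8a+c$, so the entire margin must come from the $cz^2$ term, whose coefficient $z^2\ge 4$ is precisely what is forced by Embedding Condition~(2). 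That is the one spot where every contribution has to be tracked exactly.
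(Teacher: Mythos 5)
Your proposal is correct, but it takes a genuinely different route from the paper's proof. The paper argues entirely in terms of the sizes of $x,y,z$: it first supposes $e$ also attains its lower bound $a+b+c$ (so $|x|=|y|=|z|=1$) and derives the parity contradictions $\pm 2a=b$ (when $d=4a+b$, $\alpha=2$, $\beta=-1$, $\gamma=0$) and $c=\pm a\pm b$ (when $d=a+b+c$) from Embedding Condition (3); it then claims, using only the parity of Condition (2), that at least two of $|x|,|y|,|z|$ must be $\geq 2$, and reads the minimum $e\geq 4a+4b+c$ off Condition (5). You instead resolve Condition (3) completely in each Greek configuration: in the all-nonzero branch your elimination $z=1-x-y$ turns Conditions (2) and (3) into $(\text{even})\,x+(\text{even})\,y=c$ with $c$ odd, so that branch is vacuous --- strictly stronger than the paper's conclusion there, which is merely $e\neq a+b+c$; in the $\gamma=0$ branch your gcd parametrization $(x,y)=t(b',2a')$ gives an explicit formula for $e$, and your estimates (which I checked: the $|t|=1$, $a=b$ case is tight exactly at $a=b=c$, where $e=4a+4b+c$) close the argument. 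What the paper's route buys is brevity; what yours buys is rigor. In fact your parametrization repairs a real defect in the paper's middle step: the assertion that at least two of $|x|,|y|,|z|$ are $\geq 2$ does not follow from Condition (2) alone, since $(x,y,z)=(3,-1,-1)$ satisfies $x+y+z=1$ with only one large entry and would give $e=9a+b+c$, which is smaller than $4a+4b+c$ whenever $3b>5a$; excluding such solutions requires Condition (3), which your approach uses throughout but the paper invokes only once. You are also more careful than the paper about the degenerate coincidences ($a=b$, $b=c$, $c=3a$), which the paper passes over silently. One small slip in your write-up: it is false that every all-nonzero pattern other than a sign arrangement of $(1,1,-1)$ has two squares $\geq 4$ (again witness $(3,-1,-1)$); but the claim you actually need --- that $d=a+b+c$ with $\alpha\beta\gamma\neq 0$ forces $|\alpha|=|\beta|=|\gamma|=1$ --- is immediate, since $a(\alpha^2-1)+b(\beta^2-1)+c(\gamma^2-1)=0$ is a sum of nonnegative terms.
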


\begin{proof}
First, suppose $d = 4a + b$ and $e = a + b + c$. By the Embedding Conditions, it follows that $\alpha = 2, \beta = -1$, and $\gamma = 0$, and $|x| = |y| = |z| = 1$. Embedding Condition (3) says $a\alpha x + b \beta y + c \gamma z = 0$, which reduces to $\pm 2a = b$ after substitutions. But, $b$ is odd so we have a contradiction. If instead we suppose that $d = e = a + b + c$, then by the Embedding Conditions $|\alpha| = |\beta| = |\gamma| = |x| = |y| = |z| = 1$. After substitutions, Embedding Condition (3) becomes $c = \pm a \pm b$, which is again a contradiction since all three of $a,b,c$ are odd. 

Hence when $d = 4a + b$ or $d = a + b + c$, $e \neq a + b + c$. In words, both $d$ and $e$ cannot simultaneously achieve their lower bounds as given in Lemma \ref{lem2}. It follows that at least one of $|x|$, $|y|$, or $|z|$ must be $\geq 2$. But, in fact, we can show that at least two of $|x|$, $|y|$, or $|z|$ must be $\geq 2$: Without loss of generality, suppose $x = \pm 2$. Embedding Condition (2), which says $x + y + z = 1$, yields two possibilities: If $x = 2$, then $y + z = -1$; if $x = -2$, then $y + z = 3$. In both cases, it is impossible for both $|y|= 1$ and $|z| = 1$, and therefore $|y| \geq 2$ or $|z| \geq 2$. By the symmetry in $x,y,z$ of Embedding Condition (2), it follows that at least two of $|x|$, $|y|$, or $|z|$ must be $\geq 2$.

The choices of $|x|,|y|,|z|$ that satisfy the above discovery and that minimize $e$ are $|x| = |y| = 2$ and $|z| = 1$, which yields $e = ax^2 + by^2 + cz^2 = 4a + 4b + c$. Thus, if $d$ is equal to a lower bound then $e \geq 4a + 4b + c$.
\end{proof}

We now continue with the proof of Theorem \ref{1thm0}. The goal in each of the following cases is to arrive at a contradiction to Coset Counting Condition I by showing that $|\mathcal{R}|^2 > |\mathcal{H}|$. 

\noindent {\bf Case 1:} $K \in P\{-a,-b,-c, d,e\}$ with $a = b = c = 1$. \\
\indent By Lemma \ref{lem2}, $d \geq 4a + b = 5$ or $d \geq a + b + c = 3$. Assume $d = 3$. By Lemma \ref{lem3}, we know $e \geq 4a + 4b + c = 9$. Then: 
$$
\begin{aligned}
	|\mathcal{R}|^2 & = |\text{det}(K)| \\
		& = |-abcd - abce + abde + acde +bcde| \\
		& = d(e-1) + e(d-1) + de \\
		& \geq 69 \\
		& > 49 \\
		& = (ab + ac + bc + a + b + c + 1)^2 \\
		& = |\mathcal{H}|^2, 
\end{aligned}
$$
as desired.

\noindent {\bf Case 2:} $K \in P\{-a,-b,-c, d,e\}$ with $a = b = 1$ and $c \geq 3$. \\
\indent By Lemma \ref{lem2}, $d \geq 4a + b = 5$ or $d \geq a + b + c = 2 + c$. These two expressions agree if $c = 3$, but otherwise $4a + b < 2 + c$. Thus, assuming $d \geq 5$ accounts for both situations. By Lemma \ref{lem3}, $e \geq 4a + 4b + c = 8 + c$. Then:
$$
\begin{aligned}
	|\mathcal{R}|^2 & = |\text{det}(K)| \\
		& = |-abcd - abce + abde + acde +bcde| \\
		& = d(e-c) + ce(d-1) + cde \\
		& \geq 5(c+8-c) + c(8+c)(5-1) + c(5)(c+8) \\
		& = 9c^2 + 72c +40
		& > 9c^2 + 24c + 16 \\
		& = (ab + ac + bc + a + b + c + 1)^2 \\
		& = |\mathcal{H}|^2, 
\end{aligned}
$$
as desired.

\noindent {\bf Case 3:} $K \in P\{-a,-b,-c, d,e\}$ with $a = 1$ and $b,c \geq 3$. \\
\indent By Lemma \ref{lem2}, $d \geq 4a + b = 4 + b$ or $d \geq a + b + c = 1 + b + c$. Assuming $d \geq b + 4$ accounts for both situations. By Lemma \ref{lem3}, $e \geq 4a + 4b + c = 4 + 4b + c$. Then:
$$
\begin{aligned}
	|\mathcal{R}|^2 & = |\text{det}(K)| \\
		& = |-abcd - abce + abde + acde +bcde| \\
		& = bd(e-c) + bce(d-1) + cde \\
		& \geq b(b+4)(4 + 4b + c - c) + bc(4 + 4b + c)(b+4-1) + c(b+4)(4 + 4b + c) \\
		& = 4b^3c + b^2c^2 + 4b^3 + 20b^2c + 4bc^2 + 20b^2 + 32bc + 4c^2 + 16b + 16c.
\end{aligned}
$$
Also:
$$
\begin{aligned}
	|\mathcal{H}|^2 & = (ab + ac + bc + a + b + c + 1)^2 \\
		& = b^2c^2 + 4b^2c + 4bc^2 + 4b^2 + 12bc + 4 c^2 +8b +8c + 4.
\end{aligned}
$$

\noindent Let $L = b^2c^2 + 4b^2c + 4bc^2 + 4b^2 + 12bc + 4 c^2 +8b +8c$. Then:
$$
\begin{aligned}
	|\mathcal{R}|^2 - L & \geq 4b^3c + b^2c^2 + 4b^3 + 20b^2c + 4bc^2 + 20b^2 + 32bc + 4c^2 + 16b + 16c - L \\ 
		& = 4b^3c + 4b^3 + 16b^2c + 16b^2 + 20bc + 8b + 8c \\
		& > 4 \\
		& = |\mathcal{H}|^2 - L.
\end{aligned}
$$
Thus, $|\mathcal{R}|^2 > |\mathcal{H}|$. This concludes the proof that 0-pair, odd pretzel knots with single-twists are not slice, and therefore all 0-pair, odd, 5-stranded pretzel knots are not slice.

\qed

\section{Proof of Theorem \ref{thm3}} \label{sec:thm3}

It suffices to consider only the single-pair pretzel knots $P(a,b,c,d,e)$ for which the signature vanishes and both the Lattice Embedding Condition and Coset Counting Condition I are satisfied. Let $a,b,c,d,e >0$ such that $a \leq b \leq c$, and assume that $K=P(-a,-b,-c,d,e)$. Let $Y$, $P = P_+$, $W$, $X$, and the embedding map $\alpha: H_2(P) \rightarrow H_2(X)/\tor$ be as before. 

Theorem \ref{cosets} gives that if $K$ is slice, then every element in coker($\alpha$) has a coset representative in the set $\{\pm 1 \}^m$, where $m = a + b + c$. Let $v_1$ and $v_2$ be the second and third columns (respectively) in the matrix $A$ of $\alpha$ with respect to the bases chosen in Sections \ref{donald} and \ref{dinvt}; lastly, let $B$ be the map outlined in Section \ref{dinvt}. Recall from Section \ref{donald} the regions $\mathcal{R}$ and $\mathcal{H}$ in the plane associated with $A$: $\mathcal{R}$ is defined by the vectors $\{ \widetilde{v_1}, \widetilde{v_2} \}$, with $B(v_1) = \widetilde{v_1}$ and $B(v_2) = \widetilde{v_2}$, and $\mathcal{H}$ is the set of lattice points $(x,y) \in 2\mathbb{R}^2$ such that $-a-c \leq x \leq a + c$ and $-b-c \leq y \leq b + c$. The argument now reduces to counting the number of lattice points in $\mathcal{R}$ and in $\bar{\mathcal{H}}$, where $\bar{\mathcal{H}} = \mathcal{H} / \langle \widetilde{v_1}, \widetilde{v_2} \rangle$.

The number of lattice points in $\mathcal{R}$ is computed by finding the determinant of the $2 \times 2$ matrix with column vectors $\widetilde{v_1}$ and $\widetilde{v_2}$: 
$$
| \mathcal{R} | = 
\begin{vmatrix}
\; a\alpha - c\gamma & ax - cz \; \\
\; b\beta - c\gamma & by - cz \;
\end{vmatrix}.
$$
This determinant counts all lattice points strictly on the interior of $\mathcal{R}$ as well as the lattice points lying on one half of the boundary. The number of lattice points in $\mathcal{H}$ is always $ab+ac+bc+a+b+c+1$, which gives the total number of lattice points on the interior of $\mathcal{H}$, together with {\it all} lattice points lying on the boundary of $\mathcal{H}$.
 
For a 1-pair pretzel knot, three cases must be considered: (1) when the pair is $\{a,-a\}$, (2) when the pair is $\{b,-b\}$, and (3) when the pair is $\{c,-c\}$. By assumption, the twist parameters in all three cases satisfy the embedding criterion.

\noindent {\bf Case I:} $K \in P\{-a,-b,-c,a,d\}$ \\
\indent When the twist parameters contain the pair $\{a,-a\}$, it follows that $\alpha = 1$, $\beta = \gamma = x = 0$, and $y$ and $z$ are nonzero. This yields $\widetilde{v_1} = (a, 0)$ and $\widetilde{v_2} = (-cz, by - cz)$, hence:
$$
| \mathcal{R} | = 
\begin{vmatrix}
\; a & - cz \; \\
\; 0 & by - cz \;
\end{vmatrix}
= a\; |by - cz|.
$$

As $y \rightarrow \infty$, it follows that $z \rightarrow -\infty$ by Embedding Condition (2) which says that $1 = x + y + z = 0 + y + z$; thus $|\mathcal{R}| \rightarrow \infty$. Similarly, as $y \rightarrow - \infty$ (and $ z \rightarrow \infty$), $|\mathcal{R}| \rightarrow \infty$. For this reason, $|\mathcal{R}|$ is minimized when $y$ and $z$ are both small in absolute value, i.e. when $\widetilde{v_2}$ is short. Given that $b < c$, $\widetilde{v_2}$ is shortest when $y = 2$ and $z = -1$. In this case:

\begin{equation} \label{R1}
|\mathcal{R}| = a \; |2b + c| = 2ab + ac.
\end{equation}

Now we compute an upper bound for $|\bar{\mathcal{H}}|$. Since $\mathcal{H}$ only depends on $a$, $b$, and $c$, a direct calculation of the exact size and shape of $\mathcal{H}$ is easily obtained for any fixed values of $a, b, c$. Furthermore, the simple form of $\widetilde{v_1} = (a,0)$ allows us to see that many of the lattice points in $\mathcal{H}$ lie in the same $\mathcal{R}$-coset. To (partially) determine $\bar{\mathcal{H}}$, we simply identify all lattice points in $\mathcal{H}$ that differ by some multiple of $(a, 0)$ (horizonal translations). Given that $\widetilde{v_1}$ is only one of the two vectors used to define $\mathcal{R}$, incorporating $\widetilde{v_2}$ would only create further collapsing among the cosets, therefore the actual value of $|\bar{\mathcal{H}}|$ is less than or equal to number of cosets computed as above. Important note: the Figures \ref{sp1}, \ref{sp3}, and \ref{sp5} show the lattice points in $\mathcal{H} \subset 2\mathbb{Z}^2$. That is, each grid square is $2 \times 2$.

\begin{figure}
\includegraphics[height=350pt]{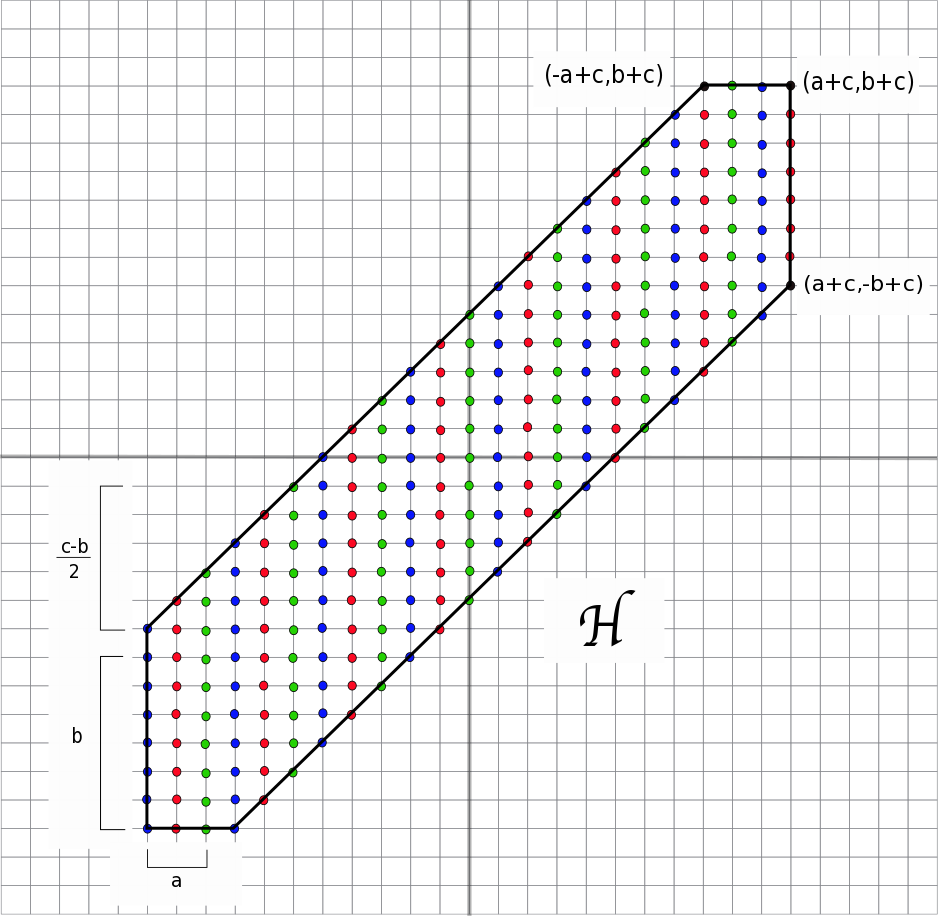}
\caption{$\mathcal{H}$ and its $\mathcal{R}$-cosets for $P(-3,-7, -19, 3, 47)$. Points of the same color {\it with the same $y$-coordinate} represent the same $\mathcal{R}$-coset.}
\label{sp1}
\end{figure} 


From Figure \ref{sp1}, we see that each of the $b+c + 1$ rows in $\mathcal{H}$ always has $a$ distinct $\mathcal{R}$-cosets. Thus, an upper bound for $|\bar{\mathcal{H}}|$ is given by:
$$
\begin{aligned}
	|\bar{\mathcal{H}}|	& \leq a(b + c + 1) \\
			& = ab + ac + a.
\end{aligned}
$$
Comparing this to (\ref{R1}), the desired result is achieved:
$$
\begin{aligned}
	|\bar{\mathcal{H}}|	& \leq ab + ac + a\\
			& < 2ab + ac\\
			& = |\mathcal{R}|,
\end{aligned}
$$
since $a \leq b \leq c$. Hence, a 5-stranded odd pretzel knot $K \in P\{-a,-b,-c,a,d\}$, with $a, b, c, d \geq 3$, is not slice.

\noindent {\bf Case 2:} $K \in P\{-a,-b,-c,b,d\}$ \\
\indent When the twist parameters contain the pair $\{b,-b\}$, it follows that $\beta = 1$, $\alpha = \gamma = y = 0$, and $x$ and $z$ are nonzero. With this, $\widetilde{v_1} = (0,b)$ and $\widetilde{v_2} = (-cz, by - cz)$, hence:
$$
| \mathcal{R} | = 
\begin{vmatrix}
\; 0 & ax - cz \; \\
\; b & - cz \;
\end{vmatrix}
= b\; |ax - cz|.
$$
Following the logic of Case 1, it suffices to show that $|\mathcal{R}| > |\bar{\mathcal{H}}|$ when the length of $\widetilde{v_2}$ is minimized. Since $a < c$, $\widetilde{v_2}$ is shortest when $x = 2$ and $z = -1$, so:

\begin{equation} \label{R2}
|\mathcal{R}| = b\; |2a + c| = 2ab + bc. 
\end{equation}

The upper bound for $|\bar{\mathcal{H}}|$ is computed for Case 2 in a similar manner as for Case 1, the only difference being that lattice points in $\mathcal{H}$ are identified as being in the same coset when they differ by multiple of $\widetilde{v_1} = (0,b)$ (vertical translations). Each of the $a+c + 1$ columns in $\mathcal{H}$ always has $b$ distinct $\mathcal{R}$-cosets. Thus, an upper bound for $|\bar{\mathcal{H}}|$ is given by:

\begin{figure}
\includegraphics[height=350pt]{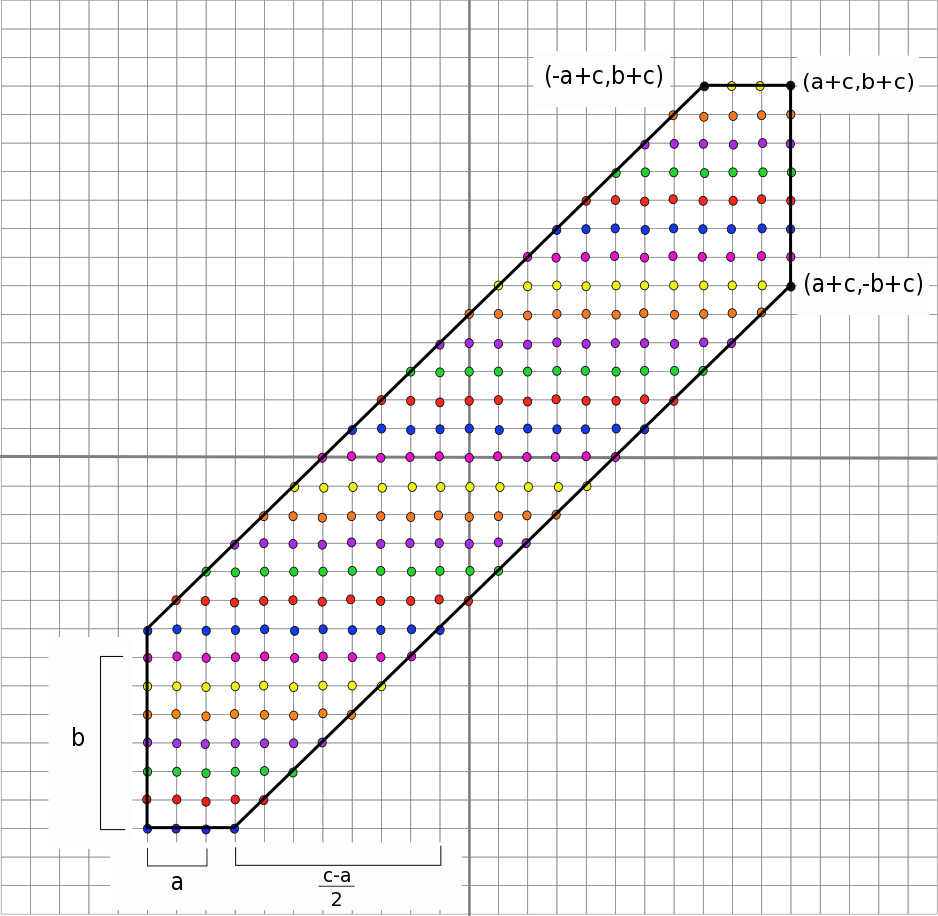}
\caption{$\mathcal{H}$ and its $\mathcal{R}$-cosets for $P(-3,-7, -19, 7, 31)$. Points of the same color {\it with the same $x$-coordinate} represent the same $\mathcal{R}$-coset.}
\label{sp3}
\end{figure} 


$$
\begin{aligned}
	|\bar{\mathcal{H}}|	& \leq b(a + c + 1) \\
			& = ab + bc + b.
\end{aligned}
$$
Comparing this to (\ref{R2}), again the desired result is achieved:
$$
\begin{aligned}
	|\bar{\mathcal{H}}|	& \leq ab + bc + b\\
			& < 2ab + bc\\
			& = |\mathcal{R}|,
\end{aligned}
$$
since $b \geq a \geq 3$. Hence, 5-stranded pretzel knots $K \in P\{-a,-b,-c,b,d)\}$, with $a, b, c,d \geq 3$, are not slice.

\noindent {\bf Case 3:} $K \in P\{-a,-b,-c,c,d\}$ \\
\indent The final case of 5-stranded odd single-pair pretzel knots has $(c,-c)$ as the pair in the twist parameters. Unlike for the 1-pair cases where the pair of canceling twist parameters is $\{a, -a\}$ or $\{b, -b\}$, the case with $\{c, -c\}$ does not necessarily imply that $\gamma = 1$, $\alpha = \beta = z = 0$, with $x$ and $y$ nonzero. Since $c \geq b \geq a$, it is possible that both $\alpha$ and $\beta$ are non-zero and Embedding Condition (4) is satisfied by $c = a\alpha^2 + b\beta^2$. In this case, however, the proof of Lemma 3 shows we would have $c \geq 4a + b$ and $e = ax^2 + by^2 +cz^2 \geq a + b + c$, which implies that $P(-a, -b, -c, c, d)$ is not slice by the proof of Theorem \ref{thm2}. 

Hence, we need only consider the case where $\gamma = 1$, $\alpha = \beta = z = 0$, with $x$ and $y$ nonzero. Under these conditions, $\widetilde{v_1} = (-c,-c)$ and $\widetilde{v_2} = (ax, by)$ and therefore:
$$
| \mathcal{R} | = 
\begin{vmatrix}
\; -c & ax \; \\
\; -c & by \;
\end{vmatrix}
= c\; |ax - by|.
$$
Again by following the logic from Case 1 and Case 2, it suffices to show that $|\mathcal{R}| > |\bar{\mathcal{H}}|$ when $\widetilde{v_2}$ is at its shortest. Since $a < b$, the length of $\widetilde{v_2}$ is minimized when $x = 2$ and $y = -1$. In this case:

\begin{equation} \label{R3}
|\mathcal{R}| = c\; |2a + b| = 2ac + bc. 
\end{equation}

The computation of an upper bound for $|\bar{\mathcal{H}}|$ in Case 3 is similar to those in Cases 1 and 2. Namely, it is computed by identifying lattice points in $\mathcal{H}$ via multiples of $(-c,-c)$ (45-degree diagonal translations). The computations are also done as before using the well-understood region $\mathcal{H}$, however it is more economical now to subtract off the number of repeat $\mathcal{R}$-coset representatives from $|\mathcal{H}|$, rather than count the cosets directly as in Cases 1 and 2. Figure \ref{sp5} indicates that:

\begin{figure}
\includegraphics[height=350pt]{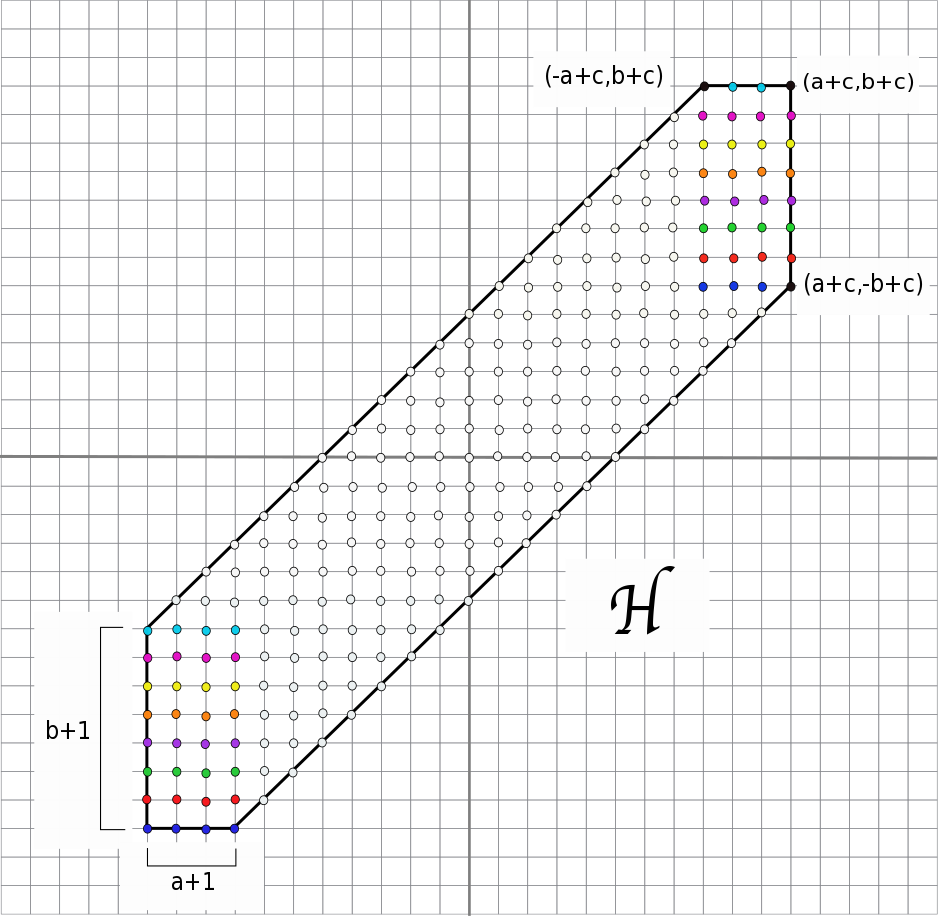}
\caption{$\mathcal{H}$ and its $\mathcal{R}$-cosets for $P(-3,-7, -19, 19, 55)$. Each white point represents a distinct $\mathcal{R}$-coset; colored points {\it lying along the same 45-degree diagonal} represent the same $\mathcal{R}$-coset.}
\label{sp5}
\end{figure} 


$$
\begin{aligned}
	|\bar{\mathcal{H}}| & \leq |\mathcal{H}| - (a+1)(b+1)\\
			& = ab + ac + bc + a + b + c + 1 - (ab + a + b + 1)\\
			& = ac + bc + c.
\end{aligned}
$$
Comparing this result with Equation (\ref{R2}) gives the result:
$$
\begin{aligned}
	|\bar{\mathcal{H}}|	& \leq ac + bc + c\\
			& < 2ac + bc\\
			& = |\mathcal{R}|,
\end{aligned}
$$
since $c \geq a \geq 3$. Thus, 5-stranded odd pretzel knots of the form $P(-a,-b,-c,c,d)$, with $a,b, c, d \geq 3$, are not slice. The proof is complete.

 \qed

\bibliographystyle{amsalpha}
\bibliography{MutantRibbonPretzels}

\end{document}